\title{An exact Ramsey principle for block sequences}
\author {Christian Rosendal}
\date {June 2008}
\newcommand {\A}{\mathbb A}
\newcommand {\B}{\mathbb B}
\newcommand {\N}{\mathbb N}
\newcommand {\R}{\mathbb R}
\newcommand {\C}{\mathbb C}
\newcommand {\U}{\mathbb U}
\newcommand {\D}{\mathbb D}
\newcommand{\norm}[1]{\lVert#1\rVert}
\newcommand{\om}{\omega}
\newcommand{\eps}{\epsilon}
\newcommand{\con}{\;\hat{}\;}
\newcommand{\tom} {\emptyset}
\newcommand{\equi}{\Leftrightarrow}
\newcommand{\til}{\rightarrow}
\newcommand{\Lim}[1]{\mathop{\longrightarrow}\limits_{#1}}
\newcommand {\del}{ \; \big| \;}
\newcommand {\go} {\mathfrak}
\newcommand {\ku} {\mathcal}
\newcommand {\e} {\exists}
\renewcommand {\a} {\forall}
\newtheorem{thm}{Theorem}
\newtheorem{cor}[thm]{Corollary}
\newtheorem{lemme}[thm]{Lemma}
\newtheorem{souslemme}[thm]{Sublemma}
\newtheorem{defi} [thm] {Definition}
\begin{document}

\thanks{The author was partially supported by NSF grant DMS 0556368}

\subjclass[2000]{Primary: 46B03, Secondary 03E15}

\keywords{Ramsey Theory, Infinite games in vector spaces}

\maketitle

\begin{abstract}
We prove an exact, i.e., formulated without $\Delta$-expansions, Ramsey
principle for infinite block sequences in vector spaces over countable fields,
where the two sides of the dichotomic principle are represented by respectively
winning strategies in Gowers' block sequence game and winning strategies in the
infinite asymptotic game. This allows us to recover Gowers' dichotomy theorem
for block sequences in normed vector spaces by a simple application of the
basic determinacy theorem for infinite asymptotic games.
\end{abstract}

\section{Introduction}
The results presented here represent a new approach to the fundamental result
of W.T. Gowers \cite{gowers}, whose uses in Banach space theory seem far from
exhausted (for applications see, e.g., \cite{gowers,minimal}). Gowers' result
is a Ramsey theoretic statement for Banach spaces that cleverly combines Ramsey
theory and game theory to compensate for the fact that a true Ramsey theoretic
result fails to hold in general. The proof of Gowers' theorem, however,
involves approximation arguments that significantly cloud the main ideas and
lead to very unwieldy computations, as can be seen from the existing proofs
\cite{gowers,jordi,kalton}. Moreover, the notion of {\em weakly Ramsey sets}
extracted from the proof incorporates approximations, which makes it hard to
induct over and extend beyond the class of analytic sets. For example, it was
unknown whether ${\bf \Sigma}^1_2$ sets are weakly Ramsey assuming Martin's
axiom, though it was shown to hold under a strengthening of MA by J. Bagaria
and J. L\'opez-Abad \cite{jordi}.

The novelty of our approach lies in the replacement of both sides of the
dichotomy with game theoretical statements, which completely eschew
approximations and allow for a very simple inductive proof. The new tools are
the {\em infinite asymptotic game} and the definition of {\em strategically
Ramsey sets} in vector spaces over countable fields. Using these, one easily
shows that under MA, ${\bf \Sigma}^1_2$ sets are strategically Ramsey, and a
version of the basic determinacy result for infinite asymptotic games
\cite{iag} connects the notions of weakly Ramsey and strategically Ramsey sets.

\section{Notation}
Let $\go F$ be a countable field and let $E$ be a countably dimensional $\go
F$-vector space with basis $(e_n)$. We equip $E$ with the discrete topology and
its countable power $E^\infty$ with the product topology. Since $E$ is a
countable, $E^\infty$ is a Polish space. Let $x,y,z,v$ be variables for {\em
non-zero} elements of $E$. If $x=\sum a_ne_n\in E$, let ${\rm supp}\; x=\{n\del
a_n\neq 0\}$ and set for $x,y\in E$,
$$
x<y\equi \a n\in {\rm supp}\; x\; \a m\in {\rm supp}\; y\;\; n<m.
$$
Similarly, if $k$ is a natural number, we set
$$
k<x\equi \a n\in {\rm supp}\; x\;\;k<n.
$$
Analogous notation is used for finite subsets of $\N$. A finite or infinite
sequence $(x_0,x_1,x_2,x_3,\ldots)$ of vectors is said to be a block sequence
if for all $n$, $x_n<x_{n+1}$.

Notice that, by elementary linear algebra, for all infinite dimensional
subspaces $X\subseteq E$ there is a subspace $Y\subseteq X$ spanned by an
infinite block sequence, called a {\em block subspace}. Henceforth, we use
variables $X,Y,Z,V,W$ to denote infinite dimensional block subspaces of $E$.
Also, denote infinite block sequences by variables $\bf x,y,z$ and finite block
sequences by variables $\vec x, \vec y, \vec z$.

\section{Gowers' game and the infinite asymptotic game}
Suppose $X\subseteq E$. We define  {\em Gowers' game} $G_X$ played below $X$
between two players I and II as follows: I and II alternate (with I beginning)
in choosing respectively infinite dimensional subspaces  $Y_0, Y_1,
Y_2,\ldots\subseteq X$ and vectors $x_0<x_1<x_2<\ldots$ according to the
constraint $x_i\in Y_i$:
$$
\begin{array}{cccccccccccc}
{\bf I} & & & Y_0 &  & Y_1 &  & Y_2 & & Y_3& &\ldots \\
{\bf II} & & &  & x_0 &  & x_1 &  & x_2 & & x_3 &\ldots
\end{array}
$$
Also,  the {\em infinite asymptotic game} $F_X$ played below $X$ is defined as
follows: I and II alternate (with I beginning) in choosing respectively natural
numbers $n_0< n_1< n_2<\ldots$ and vectors $x_0<x_1<x_2<\ldots\in X$ according
to the constraint $n_i< x_i$:
$$
\begin{array}{cccccccccccc}
{\bf I} & & & n_0 &  & n_1 &  & n_2 & & n_3& &\ldots \\
{\bf II} & & &  & x_0 &  & x_1 &  & x_2 & & x_3 &\ldots
\end{array}
$$
In both games we say that the sequence $(x_n)_{n\in \N}$ is the {\em outcome}
of the game. Moreover, if $\vec x$ is a finite block sequence, we define
Gowers' game $G_X(\vec x)$ and the infinite asymptotic game $F_X(\vec x)$ as
above except that the outcome is now $\vec x\con (x_0,x_1,x_2,\ldots)$.

\

If $X$ and $Y$ are subspaces, where $Y$ is spanned by an infinite block
sequence ${\bf y}=(y_0,y_1,y_2,\ldots)$, we write $Y\subseteq^* X$ if there is
$n$ such that $y_m\in X$ for all $m\geq n$. A simple diagonalisation argument
shows that if $X_0\supseteq X_1\supseteq X_2\supseteq \ldots$ is a decreasing
sequence of block subspaces, then there is some $Y\subseteq X_0$ such that
$Y\subseteq^* X_n$ for all $n$.

The aim of the games above is for each of the players to ensure that the
outcome ${\bf x}$ lies in some predetermined set depending on the player. By
the asymptotic nature of the game, it is easily seen that if $\A\subseteq
E^\infty$ and $Y\subseteq^* X$, then if II has a strategy in $G_X$ to play in
$\A$, i.e., to ensure that the outcome is in $\A$, then II will have a strategy
in $G_Y$ to play in $\A$ too. Similarly, if I has a strategy in $F_X$ to play
in $\A$, then I also has a strategy in $F_Y$ to play in $\A$.

\begin{defi}
We say that a set $\A\subseteq E^\infty$ is {\em strategically Ramsey} if for
all $V\subseteq E$ and all $\vec z$, there is $W\subseteq V$ such that either
\begin{itemize}
  \item [(a)] II has a strategy in $G_W(\vec z)$ to play in $\A$,
  or
  \item [(b)] I has a strategy in $F_W(\vec z)$ to
  play in $\sim\A$.
\end{itemize}
\end{defi}

\section{Analytic sets are strategically Ramsey}

\begin{lemme}\label{open}
Open sets $\U\subseteq E^\infty$ are strategically Ramsey.
\end{lemme}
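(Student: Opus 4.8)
The plan is to prove the dichotomy for open $\U$ by a fusion/diagonalization argument, reducing to the combinatorial heart: given $V$ and $\vec z$, I want to find $W \subseteq V$ so that either II can steer the outcome into $\U$ in $G_W(\vec z)$, or I can steer it out of $\U$ in $F_W(\vec z)$. Since $\U$ is open and the topology on $E^\infty$ is the product of discrete topologies, membership of an outcome ${\bf x}$ in $\U$ is decided by a finite initial segment; this is what makes option (a) plausible after only finitely many moves. So I would set up a rank-type analysis on finite block sequences $\vec z \con \vec y$ (extending $\vec z$), asking whether II has a strategy in $G_V(\vec z \con \vec y)$ to reach $\U$, and try to prune $V$ to a block subspace $W$ on which this "good" property is inherited coherently.

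Concretely, first I would fix an enumeration and build a decreasing sequence $V = V_0 \supseteq V_1 \supseteq V_2 \supseteq \cdots$ of block subspaces together with block vectors $w_0 < w_1 < w_2 < \cdots$, $w_n \in V_n$, handling at stage $n$ all finite block sequences $\vec y$ formed from $\{w_0,\dots,w_{n-1}\}$ extending $\vec z$: for each such $\vec y$, if there is \emph{any} subspace $Y \subseteq V_n$ in which II has a strategy in $G_Y(\vec y)$ to play in $\U$, I pass to such a $Y$; otherwise I record that $\vec y$ is "II-bad" and keep going. Then let $W \subseteq V_0$ be a block subspace with $W \subseteq^* V_n$ for all $n$, obtained from the diagonalization fact stated in the excerpt. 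By the $\subseteq^*$-absoluteness remark preceding the definition of strategically Ramsey — if II has a strategy in $G_X(\vec y)$ to play in $\U$ and $Y \subseteq^* X$, then II has such a strategy in $G_Y(\vec y)$ — the bookkeeping about which extensions of $\vec z$ are II-good transfers to $W$.

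Now the dichotomy on $W$: either $\vec z$ itself is II-good in $G_W(\vec z)$, in which case we are in case (a) and done; or it is not. In the latter case I claim I has a strategy in $F_W(\vec z)$ to play in $\sim \U$. The strategy is the natural one: maintain the invariant that the current finite block sequence $\vec z \con \vec y$ built so far is II-bad, i.e., in \emph{no} block subspace of (a tail of) $W$ does II have a strategy in Gowers' game from $\vec y$ to reach $\U$. When it is I's turn in $F_W$, I must play a natural number $n$; the key point is that II-badness of $\vec y$ means that for the tail $W' = \{w \in W : n < w\}$ (which is still a block subspace), \emph{every} legal next vector $x \in W'$ with $n < x$ keeps $\vec z \con \vec y \con (x)$ II-bad — otherwise, by varying over all such $x$, one would assemble a II-strategy in $G_{W'}(\vec y)$ reaching $\U$, contradicting badness. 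Hence whatever $x$ II plays in response, the invariant is preserved. Since $\vec z \con \vec y$ is II-bad at every stage, in particular II does \emph{not} have the trivial one-move strategy of reaching $\U$ immediately, so no initial segment of the outcome lies in $\U$; as $\U$ is open, the full outcome avoids $\U$, i.e., lies in $\sim \U$.

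The main obstacle I expect is getting the quantifier over subspaces in the definition of "II-good" to cooperate with the fusion: at stage $n$ I can only chase finitely many $\vec y$ into finitely many nested subspaces, and I must be careful that "II has a strategy in some $Y \subseteq V_n$" is not destroyed when I later shrink $V_n$ — this is exactly why I need both the $\subseteq^*$-transfer of II-strategies and a careful choice ensuring $W$ sits almost inside every $V_n$. A secondary subtlety is the base step of the badness-propagation argument: I must verify that if II-badness of $\vec y$ (with respect to all block subspaces of tails of $W$) fails to be preserved by \emph{some} choice of $x$ for \emph{every} choice of I's number $n$, then these witnessing subspaces can be glued into a single II-strategy from $\vec y$ in $G_W$; this gluing uses that Gowers' game lets I play arbitrary subspaces, so I's first move in that assembled strategy can encode the relevant tail, matching I's number $n$ in the asymptotic game.
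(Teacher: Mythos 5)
Your overall architecture is the same as the paper's: a good/bad analysis of positions, a fusion producing a single $W$ on which the classification stabilises, a strategy for I that maintains badness, and the openness of $\U$ to conclude that a run all of whose initial segments are bad lies in $\sim\U$. But the key combinatorial step is wrong as you state it, and your own ``secondary subtlety'' is in fact the main gap. You claim: if $\vec y$ is II-bad then for the tail beyond $n$ every $x$ keeps $\vec y\con x$ II-bad, ``otherwise, by varying over all such $x$, one would assemble a II-strategy in $G_{W'}(\vec y)$.'' The negation of your claim only gives you: for every $n$ there is \emph{some} $x$ with $n<x$ and $\vec y\con x$ good. That is a family of witnesses indexed by tails, and it does not assemble into a strategy for II in Gowers' game, because there I plays an \emph{arbitrary} block subspace $Y_0$ and II must answer with a vector \emph{inside} $Y_0$; I can simply play a subspace avoiding all your witnessing vectors. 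What II actually needs is ``for every block subspace $Z$ there is $x\in Z$ with $\vec y\con x$ good,'' and the negation of \emph{that} is ``there is a subspace $Z$ on which every one-vector extension stays bad'' --- which is of no direct use to I in $F_W$, since I's moves are integers and cannot confine II to $Z$.

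This is exactly why the paper interposes the notion \emph{worse}: $(\vec x,X)$ is worse if it is bad and $\e n\,\a y\in X\,(n<y\til(\vec x\con y,X)$ is bad$)$. The nontrivial content is a sublemma asserting that every bad pair becomes worse on a further block subspace; its proof first stabilises so that ``not bad'' implies ``good,'' then observes that if \emph{no} $Z\subseteq Y$ is worse, one gets the subspace-quantified statement ``$\a Z\subseteq Y\;\e y\in Z$ with $(\vec x\con y,Y)$ good,'' which \emph{does} assemble into a II-strategy and contradicts badness. A second fusion over this sublemma yields a single $X$ on which every position is good or worse, and only then can I legitimately play the integer $n$ witnessing worseness to preserve the invariant. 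Your proposal skips this entire layer, so the badness-propagation step fails. (A smaller, fixable issue: your fusion only treats finite sequences built from the generating vectors $w_i$, but in $F_W$ player II plays arbitrary vectors of $W$, so the stabilisation must range over all finite block sequences of $E$, which is fine since there are only countably many.)
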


\begin{proof}
Noticing that for all open $\U$,  $\U^V_{\vec z}=\{(x_i)\in V^\infty\del \vec
z\con (x_i)\in \U\}$ is also an open subset of $V^\infty$, we can suppose $V=E$
and $\vec z=\tom$. We say that
\begin{enumerate}
  \item $(\vec x,X)$ is {\em good} if II has a
strategy in $G_X(\vec x)$ to play in $\U$,
  \item $(\vec x,X)$ is {\em bad} if $\a Y\subseteq X$, $(\vec x,Y)$ is not
  good,
  \item $(\vec x,X)$ is {\em worse} if it is bad and $\e n\; \a y\in X\;( n<y\til (\vec x\con y,X)$ is bad).
\end{enumerate}
We notice that the properties good, bad and worse are $\subseteq^*$-hereditary,
i.e., if $(\vec x,X)$ is good/bad/worse and $Y\subseteq^*X$, then $(\vec x,Y)$
is good/bad/worse.
\begin{souslemme}
If $(\vec x,X)$ is bad, then there is some $Z\subseteq X$ such that $(\vec
x,Z)$ is worse.
\end{souslemme}

\begin{proof}
Notice that, as good and bad are $\subseteq^*$-hereditary, by diagonalising
over all $\vec y$, we can find some $Y\subseteq X$ such that for all $\vec y$,
$(\vec y,Y)$ is either good or bad. Suppose towards a contradiction that there
is no $Z\subseteq Y$ such that $(\vec x,Z)$ is worse. Then, as $(\vec x,Z)$ is
bad for all $Z\subseteq Y$,
$$
\a Z\subseteq Y\; \e y\in Z\; (\vec x\con y,Z)\textrm{ is not bad}
$$
and hence
$$
\a Z\subseteq Y\; \e y\in Z\; (\vec x\con y,Y)\textrm{ is good}.
$$
In other words, for all $Z\subseteq Y$ there is some $y\in Z$ such that II has
a strategy in $G_Y(\vec x\con y)$ to play in $\U$ and therefore II also has a
strategy in $G_Y(\vec x)$ to play in $\U$, contradicting that $(\vec x,X)$ was
bad.
\end{proof}
Again, using the preceding sublemma and diagonalising, we can find some
$X\subseteq E$ such that for all $\vec y$, either $(\vec y,X)$ is good or
worse. Now, if $(\tom,X)$ is good, II has a strategy in $G_X$ to play in $\U$,
so suppose instead that $(\tom,X)$ is worse. We claim that I has a strategy in
$F_X$ to produce block sequences $(x_0,x_1,x_2,\ldots)$ so that for all $m$,
$(x_0,x_1,\ldots,x_m,X)$ is worse. To see this, suppose that at some point of
the game, $\vec x$ has been played so that $(\vec x,X)$ is worse. Then there is
some $n$ such that for all $y\in X$, if $n<y$, then $(\vec x\con y,X)$ is bad
and hence even worse. Thus, we can let I play $n$. But if I follows this
strategy, then, in particular, for no $m$ can II have a strategy in
$G_X(x_0,\ldots,x_m)$ to play in $\U$ and thus as $\U$ is open,
$(x_0,x_1,x_2,\ldots)\in \sim\U$.  Therefore, I has a strategy in $F_X$ to play
in $\sim \U$.
\end{proof}

\begin{lemme}\label{ctbl split}
Suppose $\A_n\subseteq E^\infty$ and $\B=\bigcup_n\A_n$. Let $\vec x$ and
$X\subseteq E$ be given. Then there is $Z\subseteq X$ such that either
\begin{itemize}
  \item [(a)] II has a strategy in $G_Z$ to play $(z_i)$ such that
  $$
 \e n\;\a V\subseteq Z\; \textrm{I has no strategy in $F_V(\vec x\con (z_0,\ldots,z_{n}))$ to play in $\sim\A_n$},
  $$
  or
  \item[(b)] I has a strategy in $F_Z(\vec x)$ to play in $\sim \B$.
\end{itemize}
\end{lemme}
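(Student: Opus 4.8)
The plan is to re-run the good/bad/worse induction from the proof of Lemma \ref{open}, replacing the open target of II by a set of ``locked'' finite positions, after a preliminary fusion that trades the ``$\a V\subseteq Z$'' quantifier appearing in clause~(a) for a single fixed subspace. First I would pass to $X'\subseteq X$ on which this dichotomy is already resolved: using that whether I has a strategy in $F_V$ to play in a fixed set depends only on a tail of $V$, so that all the relevant properties are $\subseteq^*$-hereditary, a routine diagonalisation over all finite block sequences yields $X'\subseteq X$ such that for every position $\vec x\con(x_0,\ldots,x_n)$ with each $x_i\in X'$, either (1) I has a strategy in $F_{X'}(\vec x\con(x_0,\ldots,x_n))$ to play in $\sim\A_n$, or (2) for no $V\subseteq X'$ does I have such a strategy; call the position \emph{$n$-locked} in case~(2). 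By the same $\subseteq^*$-heredity both (1) and $n$-lockedness pass to subspaces of $X'$.

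Next, for $\vec q=(q_0,\ldots,q_m)$ with $\vec x<q_0$ and each $q_i\in V\subseteq X'$, I would say that $(\vec q,V)$ is \emph{good} if II has a strategy in $G_V(\vec x\con\vec q)$ whose outcome has, for some $n$, an initial segment $\vec x\con(z_0,\ldots,z_n)$ that is $n$-locked (where $(z_i)$ lists $\vec q$ followed by II's moves); that $(\vec q,V)$ is \emph{bad} if $(\vec q,W)$ is not good for all $W\subseteq V$; and that $(\vec q,V)$ is \emph{worse} if it is bad and $\e k\;\a y\in V\;(k<y\til (\vec q\con y,V)$ is bad). Since $n$-lockedness is relative to the \emph{fixed} space $X'$, the set of winning outcomes for II does not move as $V$ shrinks, so good, bad and worse are $\subseteq^*$-hereditary exactly as in Lemma \ref{open}. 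The sublemma there then goes through verbatim — if $(\vec q,V)$ is bad there is $Z\subseteq V$ with $(\vec q,Z)$ worse, the proof being the same stitching together of a strategy for II out of the sets $\{y:(\vec q\con y,\cdot)$ is good$\}$ — and, applying it and diagonalising over all $\vec q$, I obtain $Z\subseteq X'$ with $(\vec q,Z)$ good or worse for every $\vec q$.

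If $(\tom,Z)$ is good, II's witnessing strategy in $G_Z$ forces an $n$-locked initial segment $\vec x\con(z_0,\ldots,z_n)$, which, unravelling $n$-lockedness over $X'\supseteq Z$, is exactly clause~(a). If $(\tom,Z)$ is worse, I claim I wins $F_Z(\vec x)$ into $\sim\B$. The point is that a \emph{bad} position $(\vec q,Z)$, $\vec q=(x_0,\ldots,x_m)$, can have no $n$-locked initial segment (otherwise II wins the corresponding $G_Z$-game by playing arbitrarily, so $(\vec q,Z)$ would be good); hence, by alternative~(1) and the fact that I's strategies transfer along $\subseteq^*$, for each $n\le m$ there is a strategy $\sigma^n$ for I in $F_Z(\vec x\con(x_0,\ldots,x_n))$ to play in $\sim\A_n$. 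Player~I now plays in $F_Z(\vec x)$ by, at move $t$, playing the maximum of the natural number witnessing worseness of the current (necessarily worse) position and of the numbers proposed by $\sigma^0,\ldots,\sigma^{t-1}$ on the run so far. Playing above the worseness witness keeps the position worse — hence bad, since every position is good-or-worse and bad positions are not good — so the strategy runs forever; and the one genuinely asymptotic ingredient is that, because II's answer always exceeds each $\sigma^n$-suggestion, the tail $(x_{n+1},x_{n+2},\ldots)$ is a legal run of $F_Z(\vec x\con(x_0,\ldots,x_n))$ in which I obeys $\sigma^n$, whence the full outcome $\vec x\con(x_i)$ lies in $\sim\A_n$. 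As $n$ is arbitrary, the outcome lies in $\bigcap_n\sim\A_n=\sim\B$, giving clause~(b). I expect the main friction to be the bookkeeping in the opening fusion and checking that the ``play the maximum'' device honours all of the $\sigma^n$ simultaneously, the latter being the real conceptual novelty.
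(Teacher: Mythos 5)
Your proof is correct and follows essentially the same route as the paper's: your $n$-locked/alternative-(1) dichotomy is the paper's accept/reject stabilisation, your good/bad/worse analysis of the open set of locked positions is an inlined rerun of Lemma \ref{open} applied to the paper's set $\D$, and your ``play the maximum'' device is exactly the paper's interleaving of the strategies $\sigma_{(z_0,\ldots,z_n)}$. The only difference is presentational: the paper invokes Lemma \ref{open} as a black box and then combines strategies in a second pass, whereas you merge the two steps into a single strategy for I.
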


\begin{proof}
We say that $(\vec y,n)$ {\em accepts} $Y$ if I has a strategy in $F_Y(\vec y)$
to play in $\sim\A_n$. Also, $(\vec y,n)$ {\em rejects} $Y$ if $\a Z\subseteq
Y$, $(\vec y,n)$ does not accept $Z$. Notice that acceptance and rejection are
$\subseteq^*$-hereditary, so there is $Y\subseteq X$ such that for all $\vec y$
and $n$, either $(\vec y,n)$ accepts or rejects $Y$. Set
$$
\D=\{(z_i)\del \e n\;(\vec x\con (z_0,\ldots,z_{n}),n) \textrm{ rejects }Y\}
$$
and notice that $\D$ is open. It follows, by Lemma \ref{open}, that there is
$Z\subseteq Y$ such that either II has a strategy in $G_Z$ to play in $\D$ or I
has a strategy in $F_Z$ to play in $\sim\D$.

In the first case, II has a strategy in $G_Z$ to play $(z_i)$ such that
$$
\e n\; \a V\subseteq Y\textrm{ I has no strategy in $F_V(\vec x\con (z_0,\ldots,z_{n}))$ to play in $\sim\A_n$},
$$
which immediately implies (a). So suppose instead that I has a strategy in
$F_Z$ to play in $\sim\D$, i.e., that I has a strategy in $F_Z$ to play $(z_i)$
such that
$$
\a n\; (\vec x\con(z_0,\ldots,z_{n}),n) \textrm{ accepts }Z.
$$
Thus, I has a strategy $\sigma$ in $F_Z$ to play $(z_i)$ such that for all $n$,
I has a strategy $\sigma_{(z_0,\ldots,z_{n})}$ in $F_Z(\vec
x\con(z_0,\ldots,z_{n}))$ to play in $\sim\A_n$. By successively putting more
and more strategies into play, I thus has a strategy in $F_Z(\vec x)$ to play
in $\bigcap_n\sim\A_n=\sim\B$, which gives us (b). Concretely, if at step
$n+1$, $(z_0,\ldots,z_{n})$ has been played, then I will respond with
$$
\max\{\sigma(z_0,\ldots,z_{n}),\sigma_{(z_0)}(z_1,z_2,\ldots,z_{n}),
\ldots,\sigma_{(z_0,\ldots,z_{n})}(\tom)\}.
$$
It follows that if $(z_i)$ is the outcome of the game, then for all $n$, as II
has responded to a stronger strategy than $\sigma_{(z_0,\ldots,z_{n})}$ when
playing $(z_{n+1},z_{n+2},\ldots)$, we see that $\vec x\con
(z_0,\ldots,z_{n})\con (z_{n+1},z_{n+2},\ldots)\in \sim\A_n$. Therefore, $\vec
x\con(z_i)\in \bigcap_n\sim\A_n$.
\end{proof}

Notice that both conclusions (a) and (b) in Lemma \ref{ctbl split} are
$\subseteq^*$-hereditary in $Z$.

\begin{thm}\label{analytic}
Analytic sets  are strategically Ramsey.
\end{thm}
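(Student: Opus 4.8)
The plan is to run a fusion/diagonalization argument over the standard Souslin scheme representation of an analytic set, feeding the countable-union splitting of Lemma~\ref{ctbl split} into every node of the tree. Write $\A = \bigcup_{\beta \in \N^\N} \bigcap_k \A_{\beta\begr k}$ where each $\A_s$, for $s \in \baire$, is closed (equivalently open complements, so that Lemma~\ref{open} and Lemma~\ref{ctbl split} both apply), and the scheme is monotone: $\A_s \supseteq \A_t$ when $s \ext t$. The key observation is that Lemma~\ref{ctbl split}, applied to the decomposition $\A_s = \bigcup_{i} \A_{s\con i}$, produces for every finite block sequence $\vec z$ and every block subspace a refinement that either already gives II a winning strategy in Gowers' game landing in a set governed by ``I has no strategy to play into $\sim \A_{s\con i}$'' (this is the positive, II-side alternative (a)), or gives I a strategy in the asymptotic game $F$ to play into $\sim \A_s$ (the negative alternative (b)). Since both (a) and (b) are $\subseteq^*$-hereditary, we may iterate.

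The main construction: fix $V$ and $\vec z$; we build a $\subseteq^*$-decreasing sequence of block subspaces indexed so as to handle, simultaneously, all pairs $(s, \vec y)$ with $s \in \baire$ and $\vec y$ a finite block sequence with entries in a fixed countable dense set (the entries of the original basis suffice, since $E$ is countable). At stage corresponding to $(s,\vec y)$ we apply Lemma~\ref{ctbl split} to the current subspace with the splitting $\A_s = \bigcup_i \A_{s\con i}$ and the finite sequence $\vec y$, recording which of (a) or (b) held. After fusing (using the diagonal-subspace fact stated in Section~3) we obtain a single $W \subseteq V$ such that for every $(s,\vec y)$ one of the two alternatives holds hereditarily in $W$. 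Now set the ``rank'' of $(s,\vec y)$ to be bad if alternative (b) holds — I has a strategy in $F_W(\vec y)$ to play into $\sim\A_s$ — and run the same well-founded/ill-founded dichotomy used in Lemma~\ref{open}. If the empty node is ``bad'' in the appropriate sense, I pieces together the asymptotic-game strategies along a branch (exactly as in the last paragraph of the proof of Lemma~\ref{ctbl split}, by successively putting strategies into play and responding with the max) to get a strategy in $F_W(\vec z)$ playing into $\bigcap_k \sim \A_{\beta\begr k}$ for the branch $\beta$ produced, hence into $\sim\A$; this is alternative (b) of strategically Ramsey. Otherwise, there is a node where alternative (a) of Lemma~\ref{ctbl split} is forced all the way down: II has a strategy in Gowers' game on $W$ (starting from the relevant $\vec z \con \cdots$) to play a sequence $(z_i)$ for which, at each relevant level, I has \emph{no} strategy in the asymptotic game to avoid $\A_{s\con i}$. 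Combining these level by level along the play — again a fusion of II's Gowers strategies, legitimate because the outcome is asymptotically determined and (a) is $\subseteq^*$-hereditary — yields a single strategy for II in $G_W(\vec z)$ whose outcome $\bf z$ satisfies: for the branch $\beta$ read off from the levels, and for every $k$, $\vec z \con \bf z \notin \sim\A_{\beta\begr k}$, i.e. $\vec z \con \bf z \in \A_{\beta\begr k}$, so $\vec z \con \bf z \in \bigcap_k \A_{\beta\begr k} \subseteq \A$. That is alternative (a).

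The delicate point — and where I expect most of the work to go — is bookkeeping the infinitely many applications of Lemma~\ref{ctbl split} so that the branch $\beta$ of $\N^\N$ is actually \emph{produced during the play} rather than fixed in advance: at the $n$-th move, the entry $z_n$ played by II determines the next coordinate of $\beta$ (via the index $i$ in alternative (a) of Lemma~\ref{ctbl split} applied to the node $s = \beta\begr n$), and one has to arrange that the Gowers strategy for II is compatible with this on-line determination of $s$ at every node simultaneously. This is handled by fusing over all nodes $s$ at once before extracting strategies, exactly as the ``diagonalising over all $\vec y$'' step in the sublemma of Lemma~\ref{open}; the $\subseteq^*$-heredity of conclusions (a) and (b) noted immediately after Lemma~\ref{ctbl split} is precisely what makes this fusion legitimate. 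The other routine-but-necessary check is that the ``I has no strategy in $F_V$ to play in $\sim\A_{s\con i}$'' condition in alternative (a), once stabilized over a $\subseteq^*$-decreasing tower, upgrades to ``for every $V \subseteq W$, I has no such strategy,'' so that when II's Gowers play has produced the finite data, one can re-enter Lemma~\ref{ctbl split} at the child node $s\con i$ and continue; this is again an instance of the heredity already recorded. Modulo this organization, no new ideas beyond Lemmas~\ref{open} and~\ref{ctbl split} are needed.
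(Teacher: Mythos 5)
Your proposal follows the same architecture as the paper's proof --- represent $\A$ by a Souslin scheme, apply Lemma~\ref{ctbl split} at every node, fuse over all pairs $(s,\vec y)$ using the $\subseteq^*$-heredity of the two alternatives, and let II chain Gowers strategies together with the branch $\beta$ determined on-line by the play --- but it has a genuine gap at the decisive last step. What alternative (a) of Lemma~\ref{ctbl split} delivers at level $k$ is that I has \emph{no strategy} in $F_W(\vec z\con(z_0,\ldots,z_{m_k}))$ to play in $\sim\A_{\beta\begr k}$. This only says that I cannot force the outcome out of $\A_{\beta\begr k}$ from that finite position, i.e., that \emph{some} infinite block sequence end-extending $(z_0,\ldots,z_{m_k})$ lies in $\A_{\beta\begr k}$; it says nothing about the particular outcome ${\bf z}$ of II's composite strategy. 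Your assertion that the outcome satisfies $\vec z\con{\bf z}\notin\sim\A_{\beta\begr k}$ is therefore a non sequitur as written. The paper closes exactly this gap with a limit argument: taking $\A_s=F[N_s]$ for a continuous surjection $F\colon\N^\N\to\A$, it extracts for each $k$ a witness ${\bf z}_k=F(\beta_k)\in\A_{(n_0,\ldots,n_k)}$ end-extending $(z_0,\ldots,z_{m_k})$, and uses ${\bf z}_k\to{\bf z}$ together with $F(\beta_k)\to F(\alpha)$ to conclude ${\bf z}=F(\alpha)\in\A$. Some such passage from finite-stage non-losing conditions to membership of the actual outcome is the real content of the theorem and cannot be waved through.

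A second, related problem is your choice of scheme. You require the $\A_s$ to be closed, monotone, \emph{and} to satisfy $\A_s=\bigcup_i\A_{s\con i}$ exactly (the last is what Lemma~\ref{ctbl split} consumes, and what makes its alternative (b) land in $\sim\A_s$ rather than in the larger set $\sim\bigcup_i\A_{s\con i}$). But these three conditions together force $\A=\A_\tom$ to be closed, so no such scheme exists for a general analytic set. If you keep closedness and weaken to $\A_s\supseteq\bigcup_i\A_{s\con i}$, then alternative (b) at a node no longer yields a strategy for I into $\sim\A_s$, and the condition produced by $\D$ at the parent level no longer rules out (b) at the child, so the recursion breaks. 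The paper's choice $\A_s=F[N_s]$ --- analytic, not closed, but with exact unions --- avoids both difficulties, precisely because the final membership argument is carried by the continuity of $F$ rather than by the topology of the individual $\A_s$. The rest of your organization (the fusion, the bookkeeping, the on-line determination of $\beta$ from the lengths played) is correct and matches the paper.
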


\begin{proof}Suppose $\A\subseteq E^\infty$ is analytic.
Noticing that for all $V\subseteq E$ and $\vec z$, $\A^V_{\vec z}=\{(x_i)\in
V^\infty\del \vec z\con (x_i)\in \A\}$ is also an analytic subset of
$V^\infty$, we can suppose $V=E$ and $\vec z=\tom$. Let $F\colon\N^\N\til \A$
be a continuous surjection and set for every $s\in \N^{<\N}$, $\A_s=F[N_s]$,
where $N_s=\{\alpha\in \N^\N\del s\subseteq \alpha\}$. We note that
$\A_s=\bigcup_{n\in \N}\A_{s\con n}$. Let $\D(s,\vec x,X)$ be the set
\begin{displaymath}\begin{split}
\{(z_i)\del \e n\; \a W\subseteq X\textrm{ I has no strategy in } F_W(\vec
x\con (z_0,\ldots,z_{n})) \textrm{ to play in }\sim\A_{s\con n}\}.
\end{split}\end{displaymath}
By Lemma \ref{ctbl split}, there is $X\subseteq E$ such that for all $\vec x$
and all $s\in \N^{<\N}$ either
\begin{itemize}
  \item [(a)] II has a strategy in $G_X$ to play in $\D(s,\vec x,X)$,
  or
  \item[(b)] I has a strategy in $F_X(\vec x)$ to play in $\sim \A_s$.
\end{itemize}
Suppose that I has no strategy in $F_X$ to play in $\sim\A=\sim\A_\tom$. We
describe a strategy for II in $G_X$ to play in $\A$.

First, as II has a strategy in $G_X$ to play in $\D(\tom,\tom,X)$, he follows
this strategy until $(z_0,\ldots,z_{n_0})$ has been played such that I does not
have a strategy in $F_X(z_0,\ldots,z_{n_0})$ to play in $\sim\A_{n_0}$.

Thus, by the assumption on $X$, II must have a strategy in $G_X$ to play in
$\D((n_0),(z_0,\ldots,z_{n_0}),X)$. II follows this until further
$(z_{n_0+1},\ldots,z_{n_0+n_1+1})$ has been played such that I does not have a
strategy in $F_X(z_0,\ldots,z_{n_0},z_{n_0+1},\ldots,z_{n_0+n_1+1})$ to play in
$\sim\A_{(n_0,n_1)}$.

By the same reasoning as before, II must have a strategy in $G_X$ to play in
the set $\D((n_0,n_1),(z_0,\ldots,z_{n_0+n_1+1}),X)$. He follows this strategy
until yet another $(z_{n_0+n_1+2},\ldots,z_{n_0+n_1+n_2+2})$ has been played
such that I does not have a strategy in $F_X(z_0,\ldots,z_{n_0+n_1+n_2+2})$ to
play in $\sim\A_{(n_0,n_1,n_2)}$.

Continuing in this way and letting $m_k=(\sum_{j\leq k}n_j)+k$, the outcome of
the game will be a sequence
$$
{\bf z}=(z_0,z_1,z_2,\ldots,z_{m_0},\ldots,z_{m_1},\ldots,z_{m_2},\ldots)
$$
such that for the sequence $\alpha=(n_0,n_1,n_2,\ldots)$ and all $k$, I does
not have a strategy in $F_X(z_0,\ldots,z_{m_k})$ to play in
$\sim\A_{(n_0,n_1,\ldots,n_k)}$. It follows that for all $k$, there must be an
infinite block sequence ${\bf z}_k$ end-extending $(z_0,\ldots,z_{m_k})$ such
that ${\bf z}_k\in \A_{(n_0,n_1,\ldots,n_k)}$. So for some $\beta_k\in
N_{(n_0,n_1,\ldots,n_k)}$, we have $F(\beta_k)={\bf z}_k$. But, by continuity
of $F$, we have $F(\beta_k)\Lim{k\til\infty}F(\alpha)$, while ${\bf
z}_k\Lim{k\til\infty}{\bf z}$, so $F(\alpha)={\bf z}$ and ${\bf z}\in \A$.
Therefore, this describes a strategy for II in $G_X$ to play in $\A$.
\end{proof}

\section{Infinite asymptotic games in normed vector spaces}
Suppose now that $\go F$ is a subfield of $\R$ or $\C$ and $\|\cdot\|$ is a
norm on $E$ taking values in $\go F$. For $X\subseteq E$, denote by $\ku B_X$
the unit ball of $X$ and by $\go B(X)$ the set of block sequences $(x_i)$ of
$X$ with $\norm{x_i}\leq 1$. Also, if $\Delta=(\delta_i)$ is a sequence of
strictly positive real numbers, denoted by $\Delta>0$, and $\A\subseteq
E^\infty$, we let
$$
\A_\Delta=\{(z_i)\in E^\infty\del \e (x_i)\in \A\; \a
i\;\|x_i-z_i\|<\delta_i\}.
$$

To get a stronger statement in (b) of the definition of strategically Ramsey
sets, we need to allow approximations. For this, we use a variant of a result
from \cite{iag}, though the proof given here is in the same spirit as that
presented in \cite{minimal}.

\begin{thm}\label{iag}
Suppose there is a strategy $\sigma$ for I in $F_X$ to play in the set
$\B\subseteq E^\infty$. Then for any sequence $\Delta>0$ there are intervals
$I_0<I_1<I_2<\ldots$ of $\N$ such that for any block sequence $(x_i)\in \go
B(X)$, if
$$
\a n\;\e m\; I_0<x_n<I_m<x_{n+1},
$$
then $(x_i)\in \B_\Delta$.
\end{thm}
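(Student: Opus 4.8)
The plan is to build the intervals $I_0<I_1<I_2<\ldots$ by a recursion driven by $\sigma$, and then, for any $(x_i)\in\go B(X)$ whose gaps swallow the $I_m$'s, to feed $\sigma$ a net-approximation $(y_i)$ of $(x_i)$ as the moves of II in $F_X$: I plays by $\sigma$, II answers each move of I with an approximation $y_n$ of the corresponding $x_n$ drawn from a finite net, and the intervals are chosen so large that the number $\sigma$ returns after such a partial play is automatically below the support of the next $x_n$, so that $y_n$ is a legal response. The resulting run is consistent with $\sigma$, so its outcome $(y_i)$ lies in $\B$; and since $\|y_i-x_i\|<\delta_i$ for all $i$, this shows $(x_i)\in\B_\Delta$.

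For the nets: for $b<N$ in $\N$ and $n\in\N$ let $B(b,N)$ be the set of non-zero $v\in X$ with ${\rm supp}\;v\subseteq(b,N)$ and $\|v\|\leq 1$. The closure in the completion $\hat E$ of $(E,\|\cdot\|)$ of ${\rm span}_{\go F}\{e_{b+1},\ldots,e_{N-1}\}$ lies inside the $\R$-linear span of these finitely many vectors --- a finite dimensional, hence closed, subspace of $\hat E$ --- so $B(b,N)$ is a bounded subset of a finite dimensional normed space and is therefore totally bounded; fix a finite $\delta_n$-net $\mathcal N(b,N,n)\subseteq B(b,N)$ of $B(b,N)$, consisting of non-zero vectors of $X$ supported in $(b,N)$. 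For the recursion, put $a_0=0$; given $a_m$, let
$$
\beta_m=1+\max\Big\{\sigma(y_0,\ldots,y_j)\del(y_0,\ldots,y_j)\textrm{ is legal against }\sigma,\;\a i\leq j\;\e\, b<N\leq a_m\;\; y_i\in\mathcal N(b,N,i)\Big\},
$$
where the empty play is admitted, contributing $\sigma(\tom)$, and ``legal against $\sigma$'' means $\sigma(\tom)<y_0$, $\sigma(y_0)<y_1,\ldots,\sigma(y_0,\ldots,y_{j-1})<y_j$. This is a maximum over a \emph{finite} set: the admissible terms come from the finitely many nets $\mathcal N(b,N,i)$ with $b<N\leq a_m$ (so $i<a_m$) and are supported in $[0,a_m)$, and a block sequence supported in $[0,a_m)$ has length $\leq a_m$. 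So $\beta_m$ is well defined and depends only on $\sigma$ and $a_m$; set $b_m=\max(a_m,\beta_m)$, $I_m=[a_m,b_m]$, $a_{m+1}=b_m+1$, which closes the recursion.

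To see this works, take $(x_i)\in\go B(X)$ with $I_0<x_0$ and $\a n\,\e m\; x_n<I_m<x_{n+1}$. As both sequences increase there are $m_0<m_1<\ldots$ with $x_n<I_{m_n}<x_{n+1}$; writing $b_{m_{-1}}:=b_0$, the support bounds give ${\rm supp}\;x_n\subseteq(b_{m_{n-1}},a_{m_n})$, i.e.\ $x_n\in B(b_{m_{n-1}},a_{m_n})$, for all $n$. Build a run of $F_X$ recursively: having $y_0,\ldots,y_{n-1}$ with $y_i\in\mathcal N(b_{m_{i-1}},a_{m_i},i)$ and $\|y_i-x_i\|<\delta_i$, I plays $p_n=\sigma(y_0,\ldots,y_{n-1})$. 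Each $y_i$ ($i<n$) lies in a net with second parameter $a_{m_i}\leq a_{m_{n-1}}$, so $(y_0,\ldots,y_{n-1})$ is among the plays counted in $\beta_{m_{n-1}}$, whence $p_n<\beta_{m_{n-1}}\leq b_{m_{n-1}}<\min{\rm supp}\;x_n$. Hence $x_n\in B(b_{m_{n-1}},a_{m_n})$ may be $\delta_n$-approximated by some $y_n\in\mathcal N(b_{m_{n-1}},a_{m_n},n)$, and this $y_n$ is a non-zero vector of $X$ supported in $(b_{m_{n-1}},a_{m_n})$, so $p_n<y_n$ and (since $a_{m_{n-1}}\leq b_{m_{n-1}}$) $y_{n-1}<y_n$ --- a legal move for II. The outcome $(y_i)$ is consistent with $\sigma$, so $(y_i)\in\B$, and $\|y_i-x_i\|<\delta_i$ for every $i$; therefore $(x_i)\in\B_\Delta$.

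The step I expect to be the real obstacle is engineering the recursion so that it closes: one must recognise that the collection of approximate partial plays all of whose terms are supported below the left endpoint $a_m$ is finite --- this uses both that a block sequence confined to $a_m$ coordinates is short and that each finite coordinate window carries a finite net --- so that a single $b_m$ can dominate $\sigma$'s reply on all of them at once while depending only on $a_m$. The analytic ingredient, total boundedness (hence finite nets inside $X$) in finitely many coordinates, is routine once one passes to $\hat E$, and the index bookkeeping in the verification, though fiddly, is mechanical.
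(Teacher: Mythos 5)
Your proof is correct and follows essentially the same route as the paper's: finite $\delta_n$-nets inside finite-dimensional coordinate windows, intervals whose right endpoints dominate $\sigma$'s replies on the finitely many net-approximated positions supported below their left endpoints, and a verification that replacing each $x_n$ by its net approximant yields a legal run against $\sigma$. The only differences are cosmetic (the paper indexes its nets by exact supports and defines $I_k=[k,\alpha(k)]$ for every $k$ before passing to a subsequence, whereas you build successive intervals directly by recursion).
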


\begin{proof}Choose sets
$\D_n\subseteq \ku B_X$  such that  for each finite $d\subseteq \N$, the number
of $x\in \D_n$ such that ${\rm supp}\; x=d$ is finite, and for every $x\in \ku
B_X$ there is some $y\in \D_n$ with ${\rm supp} \;x={\rm supp}\; y$ and
$\norm{x-y}<\delta_n$. This is possible since the unit ball in $[e_i]_{i\in d}$
is totally bounded for all finite $d\subseteq\N$.

For each position $p=(n_0,y_0,\ldots,n_i,y_i)$ in $F_X$ played according to
$\sigma$ in which $y_j\in \D_j$ for all $j$, we write $p<k$ if $n_j,y_j<k$ for
all $j$. Notice that for all $k$ there are only finitely many such $p$ with
$p<k$, so we can define
$$
\alpha(k)=\max(k,\max\{\sigma(p)\del p<k\})
$$
and set $I_k=[k,\alpha(k)]$. The $I_k$ are not necessarily successive, but
their minimal elements tend to $\infty$. So, modulo passing to a subsequence,
it is enough to show that if $(x_i)\in \go B(X)$ and
$$
\a n\;\e m\; I_0<x_n<I_m<x_{n+1},
$$
then $(x_i)\in \B_\Delta$.

Suppose such $(x_i)$ is given. Find $y_i\in \D_i$ such that
$\norm{x_i-y_i}<\delta_i$ and ${\rm supp}\;x_i={\rm supp}\; y_i$ for all $i$
and let $0=b_0<b_1<b_2<\ldots$ be integers such that
$$
I_{b_0}<y_0<I_{b_1}<y_1<I_{b_2}<y_2<\ldots.
$$
We claim that there are natural numbers $n_i\leq \max I_{b_i}$ such that each
$$
p_i=(n_0,y_0,\ldots, n_i,y_i)
$$
is a  position in $F_X$ in which I has played according to $\sigma$. To see
this, notice first that $n_0=\alpha(\tom)\in I_{b_0}$, so $p_0=(n_0,y_0)$ is
played according to $\sigma$. Now, for the induction step, suppose that $p_i$
is played according to $\sigma$, and notice that $p_i<\min
I_{b_{i+1}}=b_{i+1}$. We set $n_{i+1}=\sigma(p_i)\leq \alpha(b_{i+1})=\max
I_{b+1}$, whereby $p_{i+1}$ is played according to $\sigma$. This finishes the
induction and proves the claim.

Thus, $(n_0,y_0,n_1,y_1,\ldots)$ is a run of the game in which I has followed
the strategy $\sigma$ and so $(y_i)\in\B$, whereby $(x_i)\in \B_\Delta$.
\end{proof}

The following result is a slight variant of the central result of Gowers' paper
\cite{gowers}. The variation, which is insignificant for applications, lies in
the fact that the $\Delta$-approximations appear on the opposite side of the
dichotomy. If one instead wants the approximations on the other side of the
dichotomy, and hence get the exact same statement as in \cite{gowers}, one can
just apply Theorem \ref{dicho} to $\B=\A_\Delta$ instead of $\A$ itself.

A set $\B\subseteq E^\infty$ is said to be {\em large} if for all $X\subseteq
E$, $\B\cap \go B(X)\neq \tom$. Also, let
$$
{\rm Int}_\Delta(\B)=\sim(\sim\B)_\Delta=\{(x_i)\del \a (z_i)\;(\a i\; \norm{x_i-z_i}<\delta_i\til (z_i)\in \B)\}.
$$

\begin{thm}\label{dicho}
Suppose $\A\subseteq E^\infty$ is strategically Ramsey and for some $\Delta>0$,
${\rm Int}_\Delta(\A)$ is large. Then there is $X\subseteq E$ such that II has
a strategy in $G_X$ to play in $\A$.
\end{thm}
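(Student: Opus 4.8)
The plan is to invoke the strategically Ramsey property exactly once and then show that the ``bad'' alternative is incompatible with the largeness hypothesis. Applying the definition of strategically Ramsey with $V=E$ and $\vec z=\tom$ gives a block subspace $W\subseteq E$ such that either (a) II has a strategy in $G_W$ to play in $\A$ --- in which case we are done with $X=W$ --- or (b) I has a strategy $\sigma$ in $F_W$ to play in $\sim\A$. So it suffices to derive a contradiction from (b) together with the assumption that ${\rm Int}_\Delta(\A)=\sim(\sim\A)_\Delta$ is large.

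Assume (b). First I would feed $\sigma$ into Theorem \ref{iag}, applied with $\B=\sim\A$ and the subspace $W$, obtaining intervals $I_0<I_1<I_2<\ldots$ of $\N$ such that every $(x_i)\in\go B(W)$ with $\a n\;\e m\; I_0<x_n<I_m<x_{n+1}$ lies in $(\sim\A)_\Delta$. The key step is then to pass to a block subspace $Z\subseteq W$ on which this ``spread out'' hypothesis becomes automatic. Since the minimal elements of the $I_m$ tend to infinity and $W$ is an infinite dimensional block subspace, a straightforward diagonalisation produces a block sequence $(z_k)$ in $W$ with $\max I_0<\min{\rm supp}\;z_0$ and, for each $k$, some $m$ with $\max{\rm supp}\;z_k<\min I_m\leq\max I_m<\min{\rm supp}\;z_{k+1}$; let $Z$ be the block subspace spanned by $(z_k)$.

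Now if $(x_i)\in\go B(Z)$ is any block sequence, write each $x_i$ as a linear combination of the $z_k$ with $k$ ranging over a finite set $d_i$, where $d_0<d_1<\ldots$. Then $\max{\rm supp}\;x_i=\max{\rm supp}\;z_{\max d_i}$ while $\min{\rm supp}\;x_{i+1}\geq\min{\rm supp}\;z_{\max d_i+1}$, so the interval $I_m$ planted between $z_{\max d_i}$ and $z_{\max d_i+1}$ witnesses $x_i<I_m<x_{i+1}$; moreover $I_0<x_i$ for all $i$ because $\min{\rm supp}\;x_i\geq\min{\rm supp}\;z_0>\max I_0$. Hence every $(x_i)\in\go B(Z)$ satisfies the hypothesis of Theorem \ref{iag}, so $\go B(Z)\subseteq(\sim\A)_\Delta$.

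Finally, since ${\rm Int}_\Delta(\A)=\sim(\sim\A)_\Delta$ is large, there is some $(x_i)\in{\rm Int}_\Delta(\A)\cap\go B(Z)$; this $(x_i)$ then lies in both $(\sim\A)_\Delta$ and its complement, a contradiction. Therefore (b) is impossible, (a) holds, and $X=W$ works. I expect the main obstacle to be the combinatorial verification in the third paragraph that every block sequence drawn from the carefully constructed $Z$ automatically straddles one of the intervals $I_m$ between consecutive terms; the remaining steps are direct applications of the definitions, of Theorem \ref{iag}, and of the strategically Ramsey hypothesis.
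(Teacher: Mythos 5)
Your proof is correct and follows essentially the same route as the paper: apply the strategically Ramsey property, and rule out alternative (b) by using Theorem \ref{iag} to produce a block subspace $Z$ with $\go B(Z)\subseteq(\sim\A)_\Delta$, contradicting the largeness of ${\rm Int}_\Delta(\A)$. The only difference is that you spell out the diagonal construction of $Z$ straddling the intervals $I_m$ (which the paper leaves implicit), and that verification is carried out correctly.
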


\begin{proof}
Suppose for a contradiction that for some $X\subseteq E$, I has a strategy in
$F_X$ to play in $\sim\A=E^\infty\setminus\A$. Then, using Theorem \ref{iag},
we can find some $Y\subseteq X$ such that $\go B(Y)\subseteq (\sim\A)_\Delta$,
contradicting that ${\rm Int}_\Delta(\A)$ is large. So since $\A$ is
strategically Ramsey there is instead $X\subseteq E$ such that II has a
strategy in $G_X$ to play in $\A$.
\end{proof}

Suppose $X\subseteq E$. We define  {\em Gowers' unraveled game} $H_X$ played
below $X$ between two players I and II as follows: I and II alternate (with I
beginning) in choosing  infinite dimensional subspaces  $Y_0, Y_1,
Y_2,\ldots\subseteq X$, respectively  vectors $x_0<x_1<x_2<\ldots$ and digits
$\eps_i\in \{0,1\}$, according to the constraint $x_i\in Y_i$.
$$
\begin{array}{cccccccccccc}
{\bf I} & & & Y_0 &  & Y_1 &  & Y_2 & & Y_3& &\ldots \\
{\bf II} & & &  & x_0,\eps_0 &  & x_1,\eps_1 &  & x_2,\eps_2 & & x_3,\eps_3 &\ldots
\end{array}
$$
We say that the pair of sequences $((x_n)_{n\in \N},(\eps_n)_{n\in \N})$ is the
{\em outcome} of the game.

The following  result is exceedingly useful in applications.
\begin{thm}
Let $\B\subseteq E^\infty\times 2^\infty$ be analytic such that $\A={\rm
proj}_{E^\infty}(\B)$ is large. Then for every $\Delta>0$ there is $X\subseteq
E$ such that  II has a strategy in $H_X$ to play in
$$
\B_\Delta=\{((y_n),(\eps_n))\del \e (x_n)\;\a
n\;\norm{y_n-x_n}<\delta_n\;\&\;((x_n),(\eps_n))\in \B\}.
$$
\end{thm}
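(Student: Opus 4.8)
The plan is to run the inductive construction from the proof of Theorem~\ref{analytic} inside the unraveled game $H_X$, having player~II use the digits $\eps_i$ to publish, as the play develops, a branch through a Souslin scheme for $\B$ that he is building; the one genuinely new ingredient, compared with Theorem~\ref{analytic}, is the use of the largeness of $\A={\rm proj}_{E^\infty}(\B)$, and this is handled as in Theorem~\ref{dicho} after replacing $\Delta$ by $\Delta/2$. Concretely, since $\B_{\Delta/2}\subseteq\B_\Delta$ it suffices to produce $X\subseteq E$ with II winning $H_X$ to play in $\mathcal{C}:=\B_{\Delta/2}$. This $\mathcal{C}$ is again analytic, $\mathcal{C}\ne\tom$ (as $\A$ large forces $\B\ne\tom$), and, putting $\A':={\rm proj}_{E^\infty}(\mathcal{C})=\A_{\Delta/2}$, one checks directly that $\A\subseteq{\rm Int}_{\Delta/2}(\A')$, so ${\rm Int}_{\Delta/2}(\A')$ is large. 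Fix a continuous surjection $F=(F_1,F_2)\colon\N^\N\twoheadrightarrow\mathcal{C}$ coming from the canonical tree of $\mathcal{C}$ on the countable set $(E\times2)\times\N$, so arranged that the $i$-th coordinate of $F(\alpha)$ — in particular the digit $F_2(\alpha)_i$ — depends only on $\alpha\begr(i+1)$; for $s\in\N^{<\N}$ set $\mathcal{C}_s=F[N_s]$ and $\A'_s={\rm proj}_{E^\infty}(\mathcal{C}_s)$, so that $\mathcal{C}_s=\bigcup_n\mathcal{C}_{s\con n}$ and $\A'_s=\bigcup_n\A'_{s\con n}$.

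Next, the ``bad'' alternative is impossible at the root: for every $X\subseteq E$, player~I has no strategy in $F_X$ to play in $\sim\A'$. Indeed, if $\sigma$ were such a strategy, Theorem~\ref{iag} (applied with $\sim\A'$ for $\B$ and $\Delta/2$ for $\Delta$) would produce intervals $I_0<I_1<I_2<\ldots$ of $\N$, and then any block subspace $Y\subseteq X$ whose blocks all jump over the $I_k$ would satisfy $\go B(Y)\subseteq(\sim\A')_{\Delta/2}=\sim{\rm Int}_{\Delta/2}(\A')$, contradicting that ${\rm Int}_{\Delta/2}(\A')$ is large. This is verbatim the argument in the proof of Theorem~\ref{dicho}, and is the only point at which largeness, or $\Delta$, enters.

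Finally, the construction. Applying Lemmas~\ref{open} and~\ref{ctbl split} in their original form to the sets $\A'_s\subseteq E^\infty$ and their decompositions $\A'_s=\bigcup_n\A'_{s\con n}$ — these concern only vector coordinates and strategies of~I in the digit-free game $F_W$, so II's extra moves play no role — and diagonalising over the pairs $(\vec x,s)$, one gets $X\subseteq E$ such that for all finite block sequences $\vec x$ and all $s\in\N^{<\N}$, either II has a strategy in $G_X$ to play some $(z_i)$ for which $\e n$ and $\a W\subseteq X$ player~I has no strategy in $F_W(\vec x\con(z_0,\ldots,z_n))$ to play in $\sim\A'_{s\con n}$, or I has a strategy in $F_X(\vec x)$ to play in $\sim\A'_s$. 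By the previous paragraph the second alternative fails for $s=\tom=\vec x$, so II navigates exactly as in the proof of Theorem~\ref{analytic} for his vector moves: following successive strategies of the first kind, he produces a run with vector part $(x_i)$ and a branch $\alpha=(n_0,n_1,\ldots)$, decided one coordinate at a time, such that for each $k$ player~I has no strategy in $F_X(x_0,\ldots,x_{m_k})$ (with $m_k=(\sum_{j\leq k}n_j)+k$) to play in $\sim\A'_{(n_0,\ldots,n_k)}$, and then the continuity-and-limit argument of Theorem~\ref{analytic} forces $F_1(\alpha)=(x_i)$. The sole new feature is that at round $i$ II must also emit $\eps_i$: reorganising the induction so that the branch $\alpha$ advances exactly one coordinate per round — which is legitimate because after the $\Delta/2$-fattening the vector coordinate of the canonical tree of $\mathcal{C}$ is unconstrained, so that II's obligation $x_i\in Y_i$ never blocks extension of the witness — II simply plays $\eps_i:=F_2(\alpha)_i$, which is already determined by $\alpha\begr(i+1)$. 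The outcome is then $((x_i),(\eps_i))=F(\alpha)\in\mathcal{C}=\B_{\Delta/2}\subseteq\B_\Delta$, as required.

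The step I expect to be the main obstacle is precisely this last synchronisation. In the bare construction of Theorem~\ref{analytic} the $k$-th branch coordinate is pinned down only at the far later round $m_k$, whereas in $H_X$ a digit is owed at \emph{every} round; so one must genuinely re-engineer the induction so that the witness branch — now indexing nodes of the canonical tree of $\B_{\Delta/2}$, at whose $i$-th level the digit $\eps_i$ literally sits — advances by one node per round, and then verify that the $\Delta/2$-fattening really does free the vector coordinate of that tree from its constraints, so that rate-one revelation can be maintained against arbitrary play by~I. The remaining points are routine transcriptions of Lemmas~\ref{open} and~\ref{ctbl split} and of Theorem~\ref{analytic}.
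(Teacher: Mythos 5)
Your reduction to $\mathcal C=\B_{\Delta/2}$ and your use of Theorem \ref{iag} to rule out alternative (b) at the root are fine, but the proof has a genuine gap exactly where you flag ``the main obstacle'', and the patch you offer does not close it. In the construction of Theorem \ref{analytic}, the $k$-th branch coordinate $n_k$ is \emph{by definition} the number of further vector moves II makes in phase $k$ before the rejection condition first triggers: in Lemma \ref{ctbl split} the index $n$ of the piece $\A_{s\con n}$ is tied to the length of the prefix $(z_0,\ldots,z_n)$, and II can neither predict nor control when rejection occurs. Hence at round $i$ of $H_X$ player II typically knows only $(n_0,\ldots,n_k)$ for some $k<i$, whereas the digit $\eps_i=F_2(\alpha)_i$ needs $\alpha\begr(i+1)$; the digit is owed before the information exists. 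Your justification --- that the $\Delta/2$-fattening ``frees the vector coordinate of the canonical tree'' --- is beside the point: the fattening affects which witnesses exist, not the timing with which the inductive construction reveals $\alpha$. And decoupling the index $n$ from the prefix length in Lemma \ref{ctbl split} is not a routine transcription: the combination of I's strategies in case (b) relies precisely on $\sigma_{(z_0,\ldots,z_n)}$ being joined at step $n+1$, so that the whole tail $(z_{n+1},z_{n+2},\ldots)$ is a legal play against it; with the index detached from the prefix length this bookkeeping breaks down. So as written the argument does not produce a strategy for II in $H_X$.

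The paper avoids all of this with a coding trick rather than a re-engineered induction. Assuming $\delta_i<\tfrac14$ and identifying $2$ with $\{\tfrac12,1\}$, one sets
$$
\D=\{(x_i)\in E^\infty\del ((x_{2i})_i,(\norm{x_{2i+1}})_i)\in\B\},
$$
so the digits are encoded as norms of interleaved auxiliary vectors. Largeness of $\D$ follows by splitting a given block basis into even and odd parts and normalising vectors in the odd part (this is where the hypothesis that the norm takes values in $\go F$ enters). Since $\D\subseteq{\rm Int}_\Delta(\D_\Delta)$ and $\D_\Delta$ is analytic, Theorem \ref{dicho} gives $X$ with II winning $G_X$ into $\D_\Delta$; because the two possible norms differ by $\tfrac12>2\delta_{2i+1}$, the digit survives the $\Delta$-perturbation and II can read it off at the correct round (the auxiliary vector $x_{2i+1}$ is obtained by feeding the simulated opponent an arbitrary subspace, so no extra real rounds are consumed). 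This yields the strategy in $H_X$ directly, with no modification of Lemmas \ref{open} and \ref{ctbl split}. If you want to salvage your approach you must either prove a version of Lemma \ref{ctbl split} in which the branch advances at a guaranteed rate, or adopt some such coding device.
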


\begin{proof}We can suppose that $\frac14>\delta_0>\delta_1>\ldots$. Also, for
simplicity of notation, let us suppose temporarily that $2$ is the set
$\{\frac12,1\}$, so $\B\subseteq E^\infty\times \{\frac12,1\}^\infty$.  Define
$\D\subseteq E^\infty$ as follows:
$$
\D=\{(x_i)\in E^\infty\del ((x_{2i})_{i=0}^\infty,(\norm{x_{2i+1}})_{i=0}^\infty)\in \B\}.
$$
We claim that $\D$ is large. For suppose $X\subseteq E$ is spanned by a block
sequence $(z_n)$, let $Z=[z_{2n}]$ and find some $((y_n),(\eps_n))\in \B$ such
that $(y_n)\in\A\cap\go B(Z)$. Now for all $n$, find some $v_n\in [z_{2n+1}]$
such that $y_n<v_n<y_{n+1}$ and $\norm{v_n}=\eps_n$. (This is where we use that
the norm takes values in $\go F$ and hence that we can normalise). Then
$(y_0,v_0,y_1,v_1,\ldots)\in\D\cap\go B(X)$, verifying the largeness of $\D$.
Since $\D\subseteq {\rm Int}_\Delta(\D_\Delta)$ and $\D_\Delta$ is analytic, by
Theorem \ref{dicho} there is some $X\subseteq E$ such that II has a strategy in
$G_X$ to play in $\D_\Delta$. Since for $(x_i)\in \D$, $\norm{x_{2i+1}}$ is
either $1$ or $\frac12$ and moreover $\delta_{2i+1}<\frac14$, this easily
implies that II has a strategy in $G_X$ to play in $\D_\Delta$ such that
moreover $\norm{x_{2i+1}}$ is either $1$ or $\frac 12$ for all $i$. Using this,
II evidently has a strategy in $H_X$ to play in $\B_\Delta$.
\end{proof}

\section{Strategically Ramsey sets under set theoretical hypotheses}

\begin{thm}\label{ctbl unions}
The class of strategically Ramsey sets is closed under countable unions.
\end{thm}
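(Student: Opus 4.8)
The plan is to iterate Lemma~\ref{ctbl split} countably many times, once for each set $\A_n$ in a given decomposition $\B = \bigcup_n \A_n$, producing a decreasing $\subseteq^*$-chain of block subspaces and diagonalising. Fix $V \subseteq E$ and $\vec z$. As usual we may assume $V = E$ and $\vec z = \tom$ by passing to the relative set $\B^E_{\tom}$, noting that $\A_n^E_{\tom}$ are strategically Ramsey (this needs a small remark: relativising commutes with countable unions, and strategic Ramseyness of $\A_n$ gives strategic Ramseyness of each slice). Since each $\A_n$ is strategically Ramsey, Lemma~\ref{ctbl split} (applied with the single set $\A_n$ in the role of the union, or more directly: for each $n$ and each finite block sequence and each $X$, get $Z \subseteq X$ with either (a) II has a strategy in $G_Z(\vec x)$ to play in $\A_n$, or (b) I has a strategy in $F_Z(\vec x)$ to play in $\sim \A_n$) lets me do the following dichotomy at stage $n$.

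The key step: starting from $X_0 = E$, at stage $n$ apply strategic Ramseyness of $\A_n$ to $X_n$ (with $\vec z = \tom$) to get $X_{n+1} \subseteq X_n$ such that either (a$_n$) II has a strategy in $G_{X_{n+1}}$ to play in $\A_n$, or (b$_n$) I has a strategy in $F_{X_{n+1}}$ to play in $\sim\A_n$. If alternative (a$_n$) ever occurs, say at stage $n_0$, then since $\A_{n_0} \subseteq \B$, II has a strategy in $G_{X_{n_0+1}}$ to play in $\B$, and we are done. So assume (b$_n$) holds for every $n$. By the diagonalisation remark recalled in the excerpt, pick $W \subseteq X_0$ with $W \subseteq^* X_n$ for all $n$. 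Since ``I has a strategy in $F_X$ to play in a set'' is $\subseteq^*$-hereditary (stated in the excerpt), I has, for every $n$, a strategy $\sigma_n$ in $F_W$ to play in $\sim\A_n$. Now combine these strategies exactly as in the proof of Lemma~\ref{ctbl split}: I plays in $F_W$ by responding at step $k$ with the maximum of $\sigma_0(\ldots), \sigma_1(\ldots), \ldots, \sigma_k(\ldots)$ evaluated on the appropriate tails, so that the outcome $(x_i)$ is a legal run against each $\sigma_n$ simultaneously; hence $(x_i) \in \bigcap_n \sim\A_n = \sim\B$. Thus I has a strategy in $F_W$ to play in $\sim\B$, giving alternative (b) of the definition of strategically Ramsey for $\B$.

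The main obstacle is the simultaneous-strategy-combination step: I must verify that a single strategy for I in $F_W$ can ``contain'' all the $\sigma_n$ at once. This works because of the monotone (asymptotic) nature of $F_W$ — raising the number I plays only shrinks II's legal moves, so a run in which I always plays at least what $\sigma_n$ prescribes is still, after deleting the first $n$ rounds... no: one must be slightly careful. The correct bookkeeping, copied from Lemma~\ref{ctbl split}, is that $\sigma_n$ is a strategy in $F_W$ starting from the \emph{empty} position (not from a tail), and the outcome $(x_0,x_1,\ldots)$ with I playing the pointwise maximum is a run consistent with $\sigma_n$ for every $n$ simultaneously, because at round $k$ the move $x_k$ of II satisfies the constraint imposed by $\sigma_n$'s demand since $n_k$ was chosen $\geq \sigma_n(x_0,\ldots,x_{k-1})$. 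Hence every outcome lies in $\sim\A_n$ for all $n$. The remaining points — the reduction to $V = E$, the $\subseteq^*$-heredity facts, and the existence of the diagonal $W$ — are all either explicitly recorded in the excerpt or routine, so the argument is short.
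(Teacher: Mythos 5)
There is a genuine gap, and it sits exactly at the step you flagged as ``the main obstacle'' and then talked yourself out of. If $\sigma_0,\sigma_1,\sigma_2,\ldots$ are strategies for I in $F_W$ all starting \emph{from the empty position}, the pointwise-maximum strategy (playing at round $k$ the maximum of $\sigma_n(x_0,\ldots,x_{k-1})$ over $n\leq k$) does \emph{not} produce runs consistent with every $\sigma_n$: for a fixed $n$, nothing forces II's moves $x_0,\ldots,x_{n-1}$ to dominate $\sigma_n$'s demands in the first $n$ rounds, and once a run deviates from $\sigma_n$ at some round, $\sigma_n$ guarantees nothing about the outcome. (Taking the maximum over \emph{all} $n$ at every round is not available, since $\sup_n\sigma_n(\tom)$ may be infinite.) This is not a repairable bookkeeping issue: a countable intersection of sets into which I can play need not be one. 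Concretely, let $(v_n)$ enumerate the nonzero vectors of $E$ and put $\A_n=\{(x_i)\del x_0=v_n\}$. Each $\A_n$ is clopen and satisfies alternative (b) on every $W$ and at $\vec z=\tom$ (I simply plays $n_0=\max\,{\rm supp}\;v_n$, after which II cannot play $x_0=v_n$), while (a) never holds; so your chain yields (b${}_n$) for every $n$ and you would conclude that I has a strategy in $F_W$ to play in $\bigcap_n\sim\A_n$. But every outcome of $F_W$ begins with some nonzero vector, hence lies in $\B=\bigcup_n\A_n$, so $\sim\B$ contains no block sequence and no such strategy exists. The combination in Lemma \ref{ctbl split} is sound only because the $n$-th strategy there is a strategy in $F_Z(\vec x\con(z_0,\ldots,z_n))$, i.e., with the prefix already played, so it makes no demands on the first $n+1$ rounds; your $\sigma_n$ do.

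The related omission is that your argument never constructs II's strategy in the hard case. In the example above the correct alternative for $\B$ is (a), yet each individual $\A_n$ only ever yields (b), so no amount of shrinking along your $\subseteq^*$-chain will trigger (a${}_n$). The paper's proof instead first diagonalises so that the dichotomy for each $\A_n$ is decided at \emph{every} finite prefix $\vec y$ (not just at $\tom$), and then applies Lemma \ref{ctbl split} to the union itself: in its alternative (a), II plays until a position $(z_0,\ldots,z_n)$ is reached at which I has no strategy in $F_V(\vec x\con(z_0,\ldots,z_n))$ to play in $\sim\A_n$ for any $V\subseteq Z$, concludes from the stabilised dichotomy that II has a strategy in $G_Z(\vec x\con(z_0,\ldots,z_n))$ to play in $\A_n$, and switches to that strategy. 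This prefix-switching argument is the actual content of the theorem and is absent from your proposal.
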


\begin{proof}
Let $\A_n$ be strategically Ramsey for every $n$ and set $\B=\bigcup_n\A_n$.
Let $\vec x$ and $X\subseteq E$ be given. Since each $\A_n$ is strategically
Ramsey, by diagonalising, there is some $Y\subseteq X$ such that for all $\vec
y$ and $n$, either II has a strategy in $F_Y(\vec y)$ to play in $\A_n$ or I
has a strategy in $G_Y(\vec y)$ to play in $\sim \A_n$. Also, by Lemma
\ref{ctbl split} there is $Z\subseteq Y$ such that either
\begin{itemize}
  \item [(a)] II has a strategy in $G_Z$ to play $(z_i)$ such that
  $$
 \e n\;\a V\subseteq Z\; \textrm{I has no strategy in $F_V(\vec x\con (z_0,\ldots,z_n))$ to play in $\sim\A_n$},
  $$
  or
  \item[(b)] I has a strategy in $F_Z(\vec x)$ to play in $\sim \B$.
\end{itemize}
Note that (a) implies that II has a strategy in $G_Z$ to play $(z_i)$ such that
  $$
 \e n\;\textrm{II has a strategy in $G_Z(\vec x\con (z_0,\ldots,z_n))$ to play in $\A_n$}.
  $$
And, in this case, II first follows the strategy to play some
$(z_0,\ldots,z_n)$ such that II has a strategy in $G_Z(\vec x\con
(z_0,\ldots,z_n))$ to play in $\A_n$ and thereafter continues with this other
strategy. This, combined, is a strategy for II in $G_Z(\vec x)$ to play in
$\B=\bigcup_m\A_m$.
\end{proof}

\begin{thm}[$\textrm{MA}_{\om_1}$]\label{unctbl unions}
A union of $\aleph_1$ many strategically Ramsey sets is again strategically
Ramsey.
\end{thm}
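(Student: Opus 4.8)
By Theorem~\ref{ctbl unions} I may first replace each $\A_\xi$ by $\bigcup_{\eta\le\xi}\A_\eta$: this is a countable union of strategically Ramsey sets, hence strategically Ramsey, it does not change $\B=\bigcup_{\xi<\om_1}\A_\xi$, and it makes the family increasing, $\xi\le\eta\;\saa\;\A_\xi\subseteq\A_\eta$. Fix $V$ and $\vec z$ and suppose, for a contradiction, that for every $W\subseteq V$ neither alternative holds for $\B$ below $W$ at $\vec z$; in particular, for every $W\subseteq V$ and every $\xi$, II has no strategy in $G_W(\vec z)$ to play in $\A_\xi$.

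Because $E$ is countable, the requirements ``$(\vec y,\xi)$ is decided below $W$'' --- i.e. either II has a strategy in $G_W(\vec y)$ to play in $\A_\xi$, or I has a strategy in $F_W(\vec y)$ to play in $\sim\A_\xi$ --- number only $\aleph_1$ (finite block sequences $\vec y$ times $\xi<\om_1$), each is $\subseteq^*$-open, and each is $\subseteq^*$-dense by the strategic Ramsey-ness of $\A_\xi$ and the $\subseteq^*$-heredity recorded in \S3. Building a $\subseteq^*$-decreasing $\om_1$-tower of block subspaces that handles these requirements one at a time (using the ``simple diagonalisation'' of \S3 at countable limit stages) and then invoking $\textrm{MA}_{\om_1}$ --- equivalently $\mathfrak p>\om_1$, which as for $([\N]^{\N},\subseteq^*)$ supplies a pseudo-intersection of the tower --- I obtain $W^*\subseteq V$ below which every $(\vec y,\xi)$ is decided, and decided exactly one way: were both II's strategy $\rho$ in $G_{W'}(\vec y)$ into $\A_\xi$ and I's strategy $\tau$ in $F_{W'}(\vec y)$ into $\sim\A_\xi$ available for some $W'\subseteq W^*$, letting I answer each vector $x$ of II in $G_{W'}$ by the tail subspace $\{v\in W':\tau\text{'s next integer}<v\}$ would produce a sequence that is an outcome of $G_{W'}(\vec y)$ consistent with $\rho$ and of $F_{W'}(\vec y)$ consistent with $\tau$, hence lies in $\A_\xi\cap\sim\A_\xi$. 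By monotonicity, for each $\vec y$ the set of $\xi$ for which II wins below $W^*$ is an end-segment of $\om_1$ with least element $\theta(\vec y)\le\om_1$, and $\theta(\vec y)=\om_1$ exactly when I has a strategy in $F_{W^*}(\vec y)$ to play in $\sim\A_\xi$ for \emph{every} $\xi$. If $\theta(\vec z)<\om_1$ then II has a strategy in $G_{W^*}(\vec z)$ to play in $\A_{\theta(\vec z)}\subseteq\B$, contradicting our assumption; so $\theta(\vec z)=\om_1$.

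Now put $\Theta=\sup\{\theta(\vec y)+1:\theta(\vec y)<\om_1\}$, a countable ordinal, and let $\D=\{(z_i)\del\e n\ \theta(\vec z\con(z_0,\dots,z_n))\le\Theta\}$, which is open in $E^\infty$. By Lemma~\ref{open} there is $Z\subseteq W^*$ with either (i) II having a strategy in $G_Z(\vec z)$ to play in $\D$, or (ii) I having one in $F_Z(\vec z)$ to play in $\sim\D$. In case (i), II follows that strategy until a position $\vec y=\vec z\con(z_0,\dots,z_n)$ with $\theta(\vec y)\le\Theta$ is reached, then switches to the $G_Z(\vec y)$-strategy into $\A_{\theta(\vec y)}\subseteq\A_\Theta\subseteq\B$; this contradicts the failure of alternative (a) below $Z$. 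So we are in case (ii): I has a strategy $\sigma$ in $F_Z(\vec z)$ all of whose outcomes visit only positions of threshold $\om_1$, so that along every play of $\sigma$, for every $n$ and every $\xi$, I has a strategy in $F_Z(\vec z\con(z_0,\dots,z_n))$ to play in $\sim\A_\xi$.

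It remains --- and this is the crux --- to synthesise from these data a single strategy for I in $F_{Z'}(\vec z)$, for some $Z'\subseteq Z$, to play in $\sim\B=\bigcap_\xi\sim\A_\xi$, which contradicts the failure of alternative (b) and finishes the proof. The obstruction is real: in an $\om$-long run of $F$, player I can ``activate'' only countably many of the $\aleph_1$ avoidance strategies above, and the naive bookkeeping breaks because an $\om$-decreasing chain of unbounded subsets of $\om_1$ may have bounded intersection. What rescues it is, first, monotonicity --- an outcome lies in $\sim\B$ once it avoids $\A_\xi$ for an unbounded set of $\xi$, and dominating the avoidance strategy for $\A_\xi$ at once handles every $\A_\eta$ with $\eta\le\xi$ --- and, second, a further application of $\textrm{MA}_{\om_1}$: passing to a sufficiently generic $Z'\subseteq Z$ makes the countable system of avoidance-strategy families attached to the positions of $F_{Z'}(\vec z)$ coherent enough that I can, stage by stage and by pigeonhole among the countably many possible next integer moves, keep the surviving index set unbounded in $\om_1$ while dominating its strategies; genericity is precisely what defeats the pathology just noted. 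Thus the main difficulty is this last synchronisation step; the reduction to an increasing family and the monotonicity it supplies are essential there, whereas the verification that the posets involved are ccc (indeed $\sigma$-centered, using only that $E$ is countable) so that $\textrm{MA}_{\om_1}$ applies is routine.
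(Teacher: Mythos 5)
Your opening moves match the paper's: the reduction to an increasing $\om_1$-union via Theorem \ref{ctbl unions}, the $\subseteq^*$-decreasing tower deciding each $(\vec y,\xi)$, and the pseudo-intersection supplied by $\textrm{MA}_{\om_1}$. But the step you yourself label the crux --- synthesising the $\aleph_1$ many strategies for I (one $\sigma_\xi$ for each $\xi$, playing into $\sim\A_\xi$) into a single strategy playing into $\sim\B$ --- is where the entire content of the theorem lies, and what you offer there is not a proof. You correctly name the obstruction (an $\om$-decreasing chain of unbounded subsets of $\om_1$ can have bounded intersection) and then appeal to ``a further application of $\textrm{MA}_{\om_1}$'' via an unspecified ``sufficiently generic'' $Z'$, with no poset, no dense sets, and no description of how I actually plays. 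The pigeonhole-on-next-integer-moves scheme you sketch runs straight into the pathology you identified, and passing to a generic subspace does not obviously repair it. This is a genuine gap.

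The paper's resolution is concrete and is exactly the idea your proposal is missing: a strategy for I in $F_Y(\vec x)$ is just a function from the countable set $D$ of finite block sequences of $Y$ to $\N$, i.e., an element of $\N^D$. Under $\textrm{MA}_{\om_1}$ the family $\{\sigma_\xi\}_{\xi<\om_1}$ cannot be $\leq^*$-unbounded in $\N^D$, so a single $\sigma$ dominates them all modulo finite exceptional sets $p_\xi$. Domination suffices because in the infinite asymptotic game playing a \emph{larger} integer only restricts II further, so any legal reply to $\sigma$ is a legal reply to $\sigma_\xi$, and the resulting vector sequence is the outcome of a run consistent with $\sigma_\xi$, hence lies in $\sim\A_\xi$. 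A single pigeonhole over $\om_1$ fixes one finite set $p$ working for an unbounded $S\subseteq\om_1$, and modifying $\sigma(\tom)$ ensures no position of the run ever enters $p$; monotonicity of the $\A_\xi$ then upgrades ``$\sim\A_\xi$ for all $\xi\in S$'' to ``$\sim\B$''. Your detour through the threshold function $\theta$, the ordinal $\Theta$ and Lemma \ref{open} is not incorrect, but it is unnecessary once this bounding argument is in place, and it does not substitute for it.
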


\begin{proof}
By Theorem \ref{ctbl unions}, it is enough to consider well-ordered increasing
unions of length $\om_1$. So suppose $\A_\xi\subseteq\A_\zeta\subseteq
E^\infty$ are strategically Ramsey for all $\xi<\zeta<\om_1$ and
$\B=\bigcup_{\zeta<\om_1}\A_\zeta$. Fix $\vec x$ and $X\subseteq E$. Since
every $\A_\xi$ is strategically Ramsey, we can define a decreasing sequence
$\ldots\subseteq^*X_\xi\subseteq^*\ldots\subseteq^* X_2\subseteq^*
X_1\subseteq^*X_0\subseteq X$ of length $\om_1$ such that for all $\xi<\om_1$
either
\begin{itemize}
  \item [(a)] II has a strategy in $G_{X_\xi}(\vec x)$ to play in $\A_\xi$,
  or
  \item [(b)] I has a strategy in $F_{X_\xi}(\vec x)$ to
  play in $\sim\A_\xi$.
\end{itemize}
If for some $\xi$, II has a strategy in $G_{X_\xi}(\vec x)$ to play in
$\A_\xi$, then II also has a strategy in $G_{X_\xi}(\vec x)$ to play in
$\B=\bigcup_{\zeta<\om_1}\A_\zeta$ and we are done. So suppose instead that for
every $\xi$, I has a strategy in $F_{X_\xi}(\vec x)$ to play in $\sim\A_\xi$.
By Lemma 5 in \cite{ergodic}, under $\textrm{MA}_{\om_1}$ there is a
$Y\subseteq X$ such that $Y\subseteq^*X_\xi$ for all $\xi$. Thus, for every
$\xi$, I has a strategy $\sigma_\xi$ in $F_{Y}(\vec x)$ to play in
$\sim\A_\xi$.

Notice that $\sigma_\xi$ is formally a function from the countable set $D$ of
finite block sequences $\vec y$ of $Y$ to the set of natural numbers and hence
a member of $\N^D$. By $\textrm{MA}_{\om_1}$, the family
$\{\sigma\}_{\xi<\om_1}$ cannot be $\leq^*$ unbounded in $\N^D$ and hence for
some $\sigma\in \N^D$ we have $\sigma_\xi\leq^*\sigma$ for all $\xi$, i.e., for
all $\xi$ there is a finite set $p_\xi\subseteq D$ such that
$$
\a \vec y\in D\setminus p_\xi\; \;\;\sigma_\xi(\vec y)\leq \sigma(\vec y).
$$
By reason of cardinality, there is some $p\subseteq D$ such that for an
unbounded set $S\subseteq \om_1$ we have $p_\xi=p$ for all $\xi\in S$. Now let
$n_0$ be large enough such that $n_0\nless y_0$ for all $\vec
y=(y_0,\ldots,y_m)\in p$. We modify $\sigma$ so that $\sigma(\tom)=n_0$ and
otherwise leave it unaltered. Then $\sigma$ is a strategy for I in $F_Y(\vec
x)$ to play in $\sim \B=\bigcap_{\xi<\om_1}\sim\A_\xi=\bigcap_{\xi\in
S}\sim\A_\xi$. To see this, suppose that $(z_i)$ is the outcome of a game in
which I has followed $\sigma$. Then as $n_0<z_0$, we must have
$(z_0,\ldots,z_m)\notin p$ for all $m$, and hence for all $\xi\in S$ and $m$,
$\sigma(z_0,\ldots,z_m)=\sigma_\xi(z_0,\ldots,z_m)$. If follows that for every
$\xi\in S$, I has followed the strategy $\sigma_\xi$ and hence $(z_i)\notin
\A_\xi$.
\end{proof}

Since ${\bf \Sigma}_2^1$ sets are unions of $\aleph_1$ many Borel sets, we have
the following strengthening of a result of Bagaria and L\'opez-Abad
\cite{jordi}. They essentially proved the conclusion of Theorem \ref{dicho} for
${\bf \Sigma}_2^1$ sets, but only under a hypothesis relatively consistent with
the existence of a large cardinal.  On the other hand, our hypothesis, namely
$\textrm{MA}_{\om_1}$, is equiconsistent with ZF, which permits the use of
absoluteness arguments.

\begin{cor}[$\textrm{MA}_{\om_1}$]
${\bf \Sigma}_2^1$ sets are strategically Ramsey.
\end{cor}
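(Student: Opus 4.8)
The plan is to derive the corollary immediately from Theorem \ref{unctbl unions} together with the well-known descriptive set-theoretic fact that every ${\bf \Sigma}^1_2$ subset of a Polish space can be written as an increasing union of $\aleph_1$ many Borel sets. Concretely, first I would invoke the Mansfield--Solovay-type representation: if $\A\subseteq E^\infty$ is ${\bf \Sigma}^1_2$, then $\A=\bigcup_{\xi<\om_1}B_\xi$ where each $B_\xi$ is Borel and $B_\xi\subseteq B_\zeta$ for $\xi\leq\zeta$. (One gets this, for instance, from the Shoenfield tree analysis: a ${\bf \Sigma}^1_2$ set is a length-$\om_1$ union of its sections along the canonical tree, each of which is Borel, indeed ${\bf \Sigma}^0_2$ or so; alternatively via the fact that a ${\bf \Sigma}^1_2$ set is $\om_1$-Suslin.)

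Next I would note that Borel sets are strategically Ramsey. This is not proved verbatim in the excerpt as a standalone statement, but it follows from what is available: Theorem \ref{analytic} gives that analytic sets are strategically Ramsey, and since the complement of an analytic set is co-analytic and the definition of strategically Ramsey is visibly symmetric enough — actually one should be slightly careful here. The cleanest route is: analytic sets are strategically Ramsey by Theorem \ref{analytic}, and the class of strategically Ramsey sets is closed under countable unions by Theorem \ref{ctbl unions}; whether it is also closed under complementation or countable intersections would need checking, but for Borel sets it suffices to observe that the Borel hierarchy is generated from open (hence analytic) sets, and in fact every Borel set is analytic, so Theorem \ref{analytic} already covers all Borel sets directly. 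Thus each $B_\xi$ above is strategically Ramsey.

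Finally, apply Theorem \ref{unctbl unions}: under $\textrm{MA}_{\om_1}$, the union $\bigcup_{\xi<\om_1}B_\xi=\A$ of $\aleph_1$ many strategically Ramsey sets is strategically Ramsey. This completes the proof. The main obstacle, such as it is, is purely bookkeeping: one must make sure the decomposition into $\aleph_1$ Borel pieces is genuinely increasing (Theorem \ref{unctbl unions} explicitly reduces the general $\aleph_1$-union case to the increasing case via Theorem \ref{ctbl unions}, so even a non-increasing decomposition would be fine), and one must make sure the ambient space $E^\infty$ is Polish so that the classical ${\bf \Sigma}^1_2$ representation applies — but this was established in the Notation section. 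No delicate new argument is needed; the corollary is a formal consequence of the machinery already in place.

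\begin{proof}
Every ${\bf \Sigma}^1_2$ subset of a Polish space is the union of $\aleph_1$ many Borel sets. Since $E^\infty$ is Polish and Borel sets are in particular analytic, hence strategically Ramsey by Theorem \ref{analytic}, a ${\bf \Sigma}^1_2$ set $\A\subseteq E^\infty$ is a union of $\aleph_1$ many strategically Ramsey sets. By Theorem \ref{unctbl unions}, $\A$ is strategically Ramsey.
\end{proof}
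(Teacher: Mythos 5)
Your proof is correct and is exactly the argument the paper intends: write the ${\bf \Sigma}^1_2$ set as a union of $\aleph_1$ many Borel (hence analytic, hence strategically Ramsey by Theorem \ref{analytic}) sets and apply Theorem \ref{unctbl unions}. The brief detour in your plan about closure under complementation is unnecessary, as you yourself note, since every Borel set is already analytic.
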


We do not know if the axiom of projective determinacy suffices to prove that
all projective sets are strategically Ramsey, though we very much suspect so.
Again, Bagaria and L\'opez-Abad \cite{joan} proved that under PD, projective
sets are weakly Ramsey.

\section{Relational games}
In this section we consider relational versions of Gowers' game and the
infinite asymptotic game in which both players contribute to the outcome.
Unfortunately, we can in this case only prove the Ramsey principle for open and
closed sets. Simpler relational games were first considered by A. M. Pe\l czar
\cite{anna} in connection with subsymmetry of block sequences.

\

Suppose $X\subseteq E$. We define the game $A_X$ played below $X$ between two
players I and II as follows: I and II alternate  in choosing block subspaces
$Z_0,Z_1,Z_2,\ldots\subseteq X$ and vectors $x_0<x_1<x_2<\ldots\in X$,
respectively integers $n_0<n_1<n_2<\ldots$ and vectors $y_0<y_1<y_2<\ldots\in
X$ according to the constraints $n_i< x_i$ and $y_i\in Z_i$:
$$
\begin{array}{cccccccccccc}
{\bf I} & & &n_0<x_0, Z_0 &  & n_1<x_1,Z_1 &  & n_2<x_2,Z_2 &\ldots \\
{\bf II} & &n_0 &  & y_0\in Z_0,n_1 &  & y_1\in Z_1,n_2 &  &\ldots
\end{array}
$$
We say that the sequence $(x_0,y_0,x_1,y_1,\ldots)$ is the {\em outcome} of the
game.

If $\vec x$ is a finite block sequence of {\em even} length, the game $A_X(\vec
x)$ is defined as above except that the outcome is now $\vec x\con
(x_0,y_0,x_1,y_1,\ldots)$.

On the other hand, if $\vec x$ is a finite block sequence of {\em odd} length,
$A_X(\vec x)$ is defined in a similar way as before except that I begins
$$
\begin{array}{cccccccccccc}
{\bf I} & & Z_0 &  & n_0<x_0,Z_1 &  & n_1<x_1,Z_2 &&\ldots \\
{\bf II} &  &  & y_0\in Z_0,n_0 &  & y_1\in Z_1,n_1 &  &y_2\in Z_2,n_2&\ldots
\end{array}
$$
and the {\em outcome} is now $\vec x\con(y_0,x_0,y_1,x_1,\ldots)$ rather than
$\vec x\con(x_0,y_0,x_1,y_1,\ldots)$.

\

We define the game $B_X$ in a similar way to $A_X$ except that we now have I
playing integers and II playing block subspaces:
$$
\begin{array}{cccccccccccc}
{\bf I} &  &  & x_0\in Z_0,n_0 &  & x_1\in Z_1,n_1 &  &x_2\in Z_2,n_2&\ldots       \\
{\bf II} & & Z_0 &  & n_0<y_0,Z_1 &  & n_1<y_1,Z_2 &&\ldots
\end{array}
$$
with $x_i\in Z_i\subseteq X$ and $n_i<y_i\in X$. Again, the {\em outcome} is
$(x_0,y_0,x_1,y_1,\ldots)$.

If $\vec x$ is a finite block sequence of {\em even} length, the game $B_X(\vec
x)$ is defined as above except that the outcome is now $\vec x\con
(x_0,y_0,x_1,y_1,\ldots)$.

On the other hand, if $\vec x$ is a finite block sequence of {\em odd} length,
$B_X(\vec x)$ is defined by letting I begin
$$
\begin{array}{cccccccccccc}
{\bf I} & &n_0 &  & x_0\in Z_0,n_1 &  & x_1\in Z_1,n_2 &  &\ldots    \\
{\bf II} & & &n_0<y_0, Z_0 &  & n_1<y_1,Z_1 &  & n_2<y_2,Z_2 &\ldots
\end{array}
$$
and the {\em outcome} is now $\vec x\con (y_0,x_0,y_1,x_1,\ldots)$.

\

Thus, in both games $A_X$ and $B_X$, one should remember that I is the {\em
first} to play a vector. And in $A_X$, I plays block subspaces and II plays
tail subspaces, while in $B_X$, II takes the role of playing block subspaces
and I plays tail subspaces.

Suppose $\A\subseteq E^\infty$, $Y\subseteq^* X$ and $\vec x$ are given. Then
one easily sees that if II has a strategy in $A_X(\vec x)$ to play in $\A$,
then II also has a strategy in $A_Y(\vec x)$ to play in $\A$. Similarly, if I
has a strategy in $B_X(\vec x)$ to play in $\A$, then I also has a strategy in
$B_Y(\vec x)$ to play in $\A$. Also, if II has a strategy in $A_X(\vec x)$ to
play in $\A$, then II also has a strategy in $B_X(\vec x)$ to play in $\A$.

\begin{thm}
Suppose $\A\subseteq E^\infty$ is open or closed. Then there is $X\subseteq E$
such that either
\begin{enumerate}
  \item II has a strategy in $A_X$ to play in $\A$, or
  \item I has a strategy in $B_X$ to play in $\sim\A$.
\end{enumerate}
\end{thm}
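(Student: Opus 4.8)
The plan is to mimic the proof of Lemma \ref{open}, running the same ``good/bad/worse'' machinery but with the relational games $A_X$ and $B_X$ in place of $G_X$ and $F_X$. Fix the open or closed set $\A$. Call $(\vec x,X)$ \emph{good} if II has a strategy in $A_X(\vec x)$ to play in $\A$; \emph{bad} if $(\vec x,Y)$ is not good for every $Y\subseteq X$; and \emph{worse} if it is bad and, moreover, I has a move $(n,Z)$ (where $Z\subseteq X$) such that whatever legal vector $y\in Z$ and integer $n'$ that II replies, and whatever $x$ with $n<x$ that I plays next, the resulting position $(\vec x\con(x,y),X)$ — with appropriate parity bookkeeping for odd-length $\vec x$ — is again bad. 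As in Lemma \ref{open}, goodness, badness and worseness are $\subseteq^*$-hereditary (this uses the compatibility remarks just before the theorem: passing to $Y\subseteq^* X$ preserves II-strategies in $A$ and I-strategies in $B$). First I would prove the analogue of the Sublemma: if $(\vec x,X)$ is bad, there is $Z\subseteq X$ with $(\vec x,Z)$ worse. Diagonalise to get $Y\subseteq X$ with every $(\vec y,Y)$ good or bad; if no $Z\subseteq Y$ makes $(\vec x,Z)$ worse, then for each of I's candidate opening moves $(n,Z)$ in $A_Y(\vec x)$, II can reply so as to reach a non-bad, hence good, position — and splicing these replies with the ensuing strategies yields a strategy for II in $A_Y(\vec x)$ to play in $\A$, contradicting badness.

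Then, diagonalising again with the Sublemma, I obtain $X\subseteq E$ such that every $(\vec y,X)$ is good or worse. If $(\tom,X)$ is good we are in case (1). Otherwise $(\tom,X)$ is worse, and I claim I has a strategy in $B_X$ to keep every initial segment $(x_0,y_0,\ldots)$ of the outcome worse (paired with $X$). This is exactly where the roles of $A$ and $B$ line up: in $A_X$ player I plays block subspaces $Z_i$ and II plays tail conditions $n_i$; in $B_X$ the roles are swapped, so the ``worse'' move $(n,Z)$ witnessed above — which in $A_X$ is available to I against \emph{every} II-response — is precisely the kind of move that I can execute in $B_X$, since in $B_X$ it is I who chooses the vector $x_i\in Z_i$ (inside II's subspace) subject to $n_i$, i.e. I controls the vector while II's choices of $Z_i$ and of $n_i$ only constrain I the way II's replies did in the $A_X$ analysis. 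Concretely, when the current position is worse, let $Z$ be II's current block subspace; the witnessing $n$ for worseness (relative to the opponent's $Z$) lets I pick any $x\in Z$ with $n<x$, and then no matter what II does next the position stays bad, hence worse. Carrying this out for $\om$ steps, every finite initial segment of the outcome $(x_0,y_0,x_1,y_1,\ldots)$ is worse, so in particular no initial segment $(\vec w,X)$ is good.

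To finish, I use openness/closedness of $\A$ exactly as in Lemma \ref{open}. If $\A$ is open and the outcome $\bf w$ were in $\A$, some finite initial segment $\vec w$ would already force the outcome into $\A$ — i.e.\ II would trivially have a strategy in $A_X(\vec w)$ to play in $\A$, making $(\vec w,X)$ good, a contradiction; hence $\bf w\in\sim\A$ and I has won, giving case (2). If instead $\A$ is closed, apply the whole argument to $\sim\A$ (which is open) with the roles of $A$ and $B$ interchanged: running good/bad/worse for $\sim\A$ relative to the game $B_X$ yields either II winning $B_X$ for $\sim\A$ — but a II-strategy in $A_X(\vec x)$ for $\A$ is stronger than needed and, by the last compatibility remark, $A$-strategies transfer to $B$, so one must set things up so the ``positive'' side is the $A$-game; the cleanest route is to observe that $\sim\A$ open gives, by the argument above applied verbatim, an $X$ with II having a strategy in $A_X$ to play in $\sim\A$ \emph{or} I having a strategy in $B_X$ to play in $\A$, and then dualise the naming. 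The main obstacle I anticipate is precisely this bookkeeping: getting the parity conventions for $A_X(\vec x)$ and $B_X(\vec x)$ on odd/even-length $\vec x$ to match up so that ``the move witnessing worseness in the $A$-analysis'' is literally a legal move for I in the $B$-game at the corresponding position, and handling the closed case by a clean duality rather than a second from-scratch argument.
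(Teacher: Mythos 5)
Your overall plan (good/bad/worse, two diagonalisations, openness to conclude) is the right one and matches the paper's, but the two places you flag as ``bookkeeping'' are exactly where your argument has genuine gaps, and the paper's devices for them are not cosmetic. First, your definition of \emph{worse} is not workable. You append two vectors $(x,y)$ at once and quantify ``$\e (n,Z)\; \a y\in Z\; \a x\;(n<x)$'', but $(n,Z)$ is not a legal move of I in either game ($A_X$ has I playing $(x,Z)$ and II playing $n$; $B_X$ has I playing $(x,n)$ and II playing $Z$), the universal quantifier over I's own $x$ is the wrong polarity for a move I gets to choose, and in $B_X$ I's vector is constrained by II's \emph{subspace} while II's vector is constrained by I's \emph{tail} --- neither of which your clause reflects. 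The paper's definition is parity-split and extends $\vec x$ by one vector at a time: for $|\vec x|$ odd (next vector is II's $y$), worse means bad and $\e n\;\a y\in X\;(n<y\til (\vec x\con y,X)\textrm{ bad})$; for $|\vec x|$ even (next vector is I's $x$), worse means bad and $\a Y\subseteq X\;\e x\in Y\;(\vec x\con x,X)\textrm{ bad}$. These two quantifier shapes are precisely I's two kinds of moves in $B_X$ (play a tail $n$ constraining II's $y$; find a vector inside whatever $Z$ II hands you), which is what makes ``$(\tom,X)$ worse $\Rightarrow$ I keeps every position worse in $B_X$'' go through. With your version that step cannot be executed, and your sublemma's ``splicing'' argument (which in the even case isn't even needed --- the paper shows bad already implies worse at even length) also doesn't parse against either game.

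Second, the closed case. Applying the open case to $\sim\A$ yields ``II has a strategy in $A_X$ to play in $\sim\A$, or I has a strategy in $B_X$ to play in $\A$'' --- neither disjunct is a disjunct of the theorem, and there is no ``dualising the naming'' available: $A_X$ and $B_X$ are different games (in both, I plays the first vector, and the subspace/tail roles are fixed). The paper's actual trick is to set $\B=E\times\sim\A=\{x\con{\bf x}\del {\bf x}\notin\A\}$, which is open, and apply the open case to $\B$. Prepending one coordinate flips the parity of every position, so a II-strategy in $A_X$ to play in $\B$ converts into an I-strategy in $B_X$ to play in $\sim\A$, and an I-strategy in $B_X$ to play in $\sim\B$ converts into a II-strategy in $A_X$ to play in $\A$. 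You need this shift (or something equivalent); the symmetry you were hoping to invoke does not exist.
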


\begin{proof}Suppose first that $\A$ is open.
We say that
\begin{itemize}
  \item [(a)] $(\vec x,X)$ is {\em good} if II has a strategy in $A_X(\vec x)$ to play in
  $\A$.
  \item [(b)] $(\vec x,X)$ is {\em bad} if $\a  Y\subseteq X$, $(\vec x,Y)$ is not good.
  \item [(c)] $(\vec x,X)$ is {\em worse} if it is bad and either
  \begin{enumerate}
    \item $|\vec x|$ is odd and $\e n\;\a y\in X\;(n<y\til (\vec x\con y,X) \textrm{ is
    bad})$, or
    \item $|\vec x|$ is even and $\a Y\subseteq X\;\e x\in Y\;(\vec x\con x,X) \textrm{ is
    bad})$.
  \end{enumerate}
\end{itemize}
One checks as always that good, bad and worse are all $\subseteq^*$-hereditary.

\begin{lemme}
If $(\vec x,X)$ is bad, then there is some $Z\subseteq X$ such that $(\vec
x,Z)$ is worse.
\end{lemme}

\begin{proof}By diagonalisation, we can find some $Y\subseteq X$ such that for all $\vec y$,
$(\vec y,Y)$ is either good or bad.

Assume first that $|\vec x|$ is even. Since $(\vec x,Y)$ is bad, we have $\a
V\subseteq X$ II has no strategy in $A_V(\vec x)$ to play in $\A$. So $\a
V\subseteq X\;\e x\in V$ such that II has no strategy in $A_V(\vec x\con x)$ to
play in $\A$, and hence such that $(\vec x\con x,V)$ is not good. Thus,
$$
\a V\subseteq X\;\e x\in V\; (\vec x\con x,Y) \textrm{ is bad},
$$
and so already $(\vec x,Y)$ is worse.

Now suppose instead that $|\vec x|$ is odd and, towards a contradiction, that
there is no $Z\subseteq Y$ such that $(\vec x,Z)$ is worse. Then, as $(\vec
x,Y)$ is bad,  $\a Z\subseteq Y\; \e y\in Z\; (\vec x\con y,Z)$ is not bad and
thus also $\a Z\subseteq Y\; \e y\in Z\; (\vec x\con y,Y)$ is good. So
$$
\a Z\subseteq Y\; \e y\in Z\; \textrm{ II has a strategy in $A_Y(\vec x\con y)$ to play in $\A$},
$$
and hence II also has a strategy in $A_Y(\vec x)$ to play in $\A$,
contradicting that $(\vec x,Y)$ is bad.
\end{proof}

Diagonalising, we now find $X\subseteq E$ such that for all $\vec x$, either
$(\vec x,X)$ is good or worse. Assume that II has no strategy in $A_X$ to play
in $\A$, whereby $(\tom,X)$ is worse. Then, by unraveling the definition of
worse and using that bad and worse coincide below $X$, one sees that I has a
strategy in $B_X$ to produce block sequences $(z_0,z_1,z_2,\ldots)$ so that for
all $m$, $(z_0,z_1,\ldots,z_m,X)$ is worse. In particular, for no $m$ does II
have a strategy in $A_X(z_0,\ldots,z_m)$ to play in $\A$, and so, as $\A$ is
open, we must have $(z_0,z_1,z_2,\ldots)\in \sim\A$. So I has a strategy in
$B_X$ to play in $\sim\A$, which finishes the proof for open sets.

Now if instead $\A$ is closed, set
$$
\B=\{x\con {\bf x}\del x\in E\;\&\; {\bf x}\notin \A\}=E\times \sim\A,
$$
which is open. So find some $X\subseteq E$ such that either
\begin{enumerate}
  \item II has a strategy in $A_X$ to play in $\B$, or
  \item I has a strategy in $B_X$ to play in $\sim\B$.
\end{enumerate}
Now if  II has a strategy in $A_X$ to play in $\B$, then I has a strategy in
$B_X$ to play in $\sim\A$. And if I has a strategy in $B_X$ to play in
$\sim\B$, then II has a strategy in $A_X$ to play in $\A$, which is what needed
proof.
\end{proof}

\

\

\begin{flushleft}
{\em Address of C. Rosendal:}\\
Department of Mathematics, Statistics, and Computer Science\\
University of Illinois at Chicago\\
322 Science and Engineering Offices (M/C 249)\\
851 S. Morgan Street\\
Chicago, IL 60607-7045.
\end{flushleft}

\end{document}